\documentclass[12pt, leqno]{article}
\usepackage{amsmath}
\usepackage{amsthm}
\usepackage{amssymb}
\usepackage{latexsym}
\usepackage{graphicx}
\allowdisplaybreaks[1]
\usepackage{amsfonts}
\usepackage{ascmac}
\usepackage{color}
\usepackage{enumerate}
\theoremstyle{definition}
\theoremstyle{plain}
\allowdisplaybreaks[1]
%

\topmargin=0mm
\headheight=0mm
\headsep=0mm
\textwidth=158mm
\textheight=243mm
\oddsidemargin=0mm
\evensidemargin=0mm
\baselineskip=4mm
\parskip=5pt
\parindent=13pt
\pagestyle{plain}
\date{}
%

%
%
\newtheorem{Thm}{Theorem}[section]
\newtheorem{Prop}[Thm]{Proposition}
\newtheorem{Lemma}[Thm]{Lemma}

\newcommand{\brho}{\bar{\rho}}
\newcommand{\bbrho}{\bar{\bar{\rho}}}

\newcommand{\dt}{\Delta t}
\newcommand{\dx}{\Delta x}

\newcommand{\p}{\partial}

\newcommand{\dis}{\displaystyle}

\newcommand{\norm}{\parallel}

\newcommand{\Z}{{\mathbb Z}}

\newcommand{\T}{{\mathbb T}}

\newcommand{\R}{{\mathbb R}}


%

\def\text#1{\mbox{#1 }}

\title{\bf Stochastic and Variational Approach \\to the Lax-Friedrichs Scheme}
\author{Kohei Soga
\footnote{Department of Pure and Applied Mathematics, Waseda University, Tokyo 169-8555, Japan \qquad\qquad
(kohei-math@toki.waseda.jp). Supported by Grant-in-Aid for JSPS Fellows (20-6856).}}
%
\begin{document}
\maketitle
\begin{abstract} 
\noindent We present a stochastic and variational aspect of the Lax-Friedrichs scheme applied to hyperbolic scalar conservation laws. This is a finite difference version of  Fleming's results ('69) that the vanishing viscosity method is characterized by stochastic processes and calculus of variations. We convert the difference equation into that of the Hamilton-Jacobi type and introduce corresponding calculus of variations with random walks. The stability of the scheme is obtained through the calculus of variations. The convergence of approximation is derived from the law of large numbers in hyperbolic scaling limit of random walks. The main advantages due to our approach are the following: Our framework is basically pointwise convergence, not $L^1$ as usual, which yields uniform convergence except ``small'' neighborhoods of shocks; The convergence proof is verified for arbitrarily large time interval, which is hard to obtain in the case of flux functions of general types depending on both space and time; The approximation of characteristics curves is available as well as that of PDE-solutions, which is particularly important for applications of the Lax-Friedrichs scheme to the weak KAM theory.  
\noindent  
 
\medskip

\noindent{\bf Keywords:} Lax-Friedrichs scheme; scalar conservation law; Hamilton-Jacobi equation; calculus of variations; random walk; law of large numbers  \medskip

\noindent{\bf AMS subject classifications:}  65M06; 35L65; 49L25; 60G50
\end{abstract}
\setcounter{section}{0}
\setcounter{equation}{0}
\section{Introduction}
The Lax-Friedrichs scheme is one of the oldest, simplest and most universal technique of computing PDEs. There is the huge literature on the stability and convergence of the Lax-Friedrichs scheme based on the $L^1$-framework, particularly for shock waves.   
In this paper we investigate the Lax-Friedrichs scheme in terms of scaling limit of random walks and calculus of variations and present several useful new results, relating the scheme to principles of probability theories and calculus of variations. We refer to only the Lax-Friedrichs scheme applied to hyperbolic scalar conservation laws. However our results imply that finite difference methods applied to evolution equations are likely to possess stochastic or variational structures. It is a meaningful effort to investigate the Lax-Friedrichs scheme or other schemes applied to PDEs of various types by similar approach to ours.    

We consider initial value problems of the inviscid hyperbolic  scalar conservation law
\begin{eqnarray}\label{CL}
\left\{
\begin{array}{lll}
&\dis u_t+H(x,t,c+u)_x=0\,\,\,\,\mbox{in $\T\times(0,T]$,}\medskip\\
&u(x,0)=u^0(x)\in L^\infty(\T)\,\,\,\,\mbox{on $\T$},\quad\int_\T u^0(x)dx=0,
\end{array}
\right.
\end{eqnarray}
where $c$ is a parameter varying within an interval $[c_0,c_1]$ and $\T:=\R/\Z$ is the standard torus. The assumptions for the flux function $H$ are the following (A1)-(A4):

(A1) $H(x,t,p):\T^2\times\R\to\R$, $C^2$ \quad  (A2) $H_{pp}>0$  \quad
 (A3) $\dis \lim_{|p|\to+\infty}\frac{H(x,t,p)}{|p|}=+\infty$.
 
 \noindent By (A1)-(A3), we have the Legendre transform $L(x,t,\xi)$ of $H(x,t,\cdot)$, which is now given by 
 $$L(x,t,\xi)=\sup_{p\in\R}\{\xi p -H(x,t,p)\}$$
 and satisfies 

(A1)' $L(x,t,\xi):\T^2\times\R\to\R$, $C^2$ \quad  (A2)' $L_{\xi\xi}>0$  \quad 
(A3)' $\dis \lim_{|\xi|\to+\infty}\frac{L(x,t,\xi)}{|\xi|}=+\infty$.
 
\noindent The last assumption is 
 
 (A4) There exists $\alpha>0$ such that $|L_x|\le\alpha(|L|+1)$. 
 
\noindent  Throughout this paper, $\T$-dependency is identified with $\R$-dependency with $\Z$-periodicity and $\T$ with $[0,1)$. (A1) and (A2) are standard in the theories of conservation laws. (A3) is necessary, when we introduce a variational approach stated below to our problems. (A4) is used for derivation of boundedness of minimizers of the variational problems. The problems (\ref{CL}) appear not only in continuum mechanics but also in Hamiltonian and Lagrangian dynamics generated by $H$ and $L$ 
\cite{Fathi97-1}, \cite{JKM}, \cite{WE}, \cite{Sob}. In the latter case the periodic setting is standard. We remark that the whole space setting is also available with additional assumptions for $H$ required by variational techniques.    

It is sometimes very convenient to introduce initial value problems of Hamilton-Jacobi equations which are equivalent to (\ref{CL})
\begin{eqnarray}\label{HJ}
\left\{
\begin{array}{lll}
&\dis v_t+H(x,t,c+v_x)=h(c)\,\,\,\,\mbox{in $\T\times(0,T]$,}\medskip\\
&v(x,0)=v^0(x)\in Lip(\T)\,\,\,\,\mbox{on $\T$}, \quad
\end{array}
\right.
\end{eqnarray}
where $h:[c_0,c_1]\to\R$ is a given function.  We consider (\ref{CL}) and (\ref{HJ}) in the class of generalized solutions called entropy solutions and viscosity solutions respectively. Such solutions exist in $C^0((0,T];L^\infty(\T))$ and $Lip(\T\times(0,T])$. If $u^0=v^0_x$, then the entropy solution $u$ of (\ref{CL}) and the viscosity solution $v$  of (\ref{HJ}) satisfy $u=v_x$ ($u$ is considered as a representative element). From now on we always assume that $u^0=v^0_x$.  

One of the central achievements in the analysis of (\ref{CL}) and (\ref{HJ}) is that they are closely related to the deterministic calculus of variations: The value of $v$ at each point $(x,t)\in\T\times(0,T]$ is given by 
 \begin{eqnarray}\label{value-func}
v(x,t)=\inf_{\gamma\in AC,\,\,\gamma(t)=x}\left\{ \int^t_0 L^c(\gamma(s),s,\gamma'(s))ds+v^0(\gamma(0)) \right\}+h(c)t,
\end{eqnarray}
where $AC$ is the family of absolutely continuous curves $\gamma:[0,t]\to\R$ and $L^c(x,t,\xi):=L(x,t,\xi)-c\xi$ is the Legendre transform of $H(x,t,c+\cdot)$ (see e.g. \cite{Cannarsa}). We can find a minimizing curve $\gamma^\ast$ of (\ref{value-func}), which is a backward characteristic curve of $u,v$ and a $C^2$-solution of the Euler-Lagrange equation associated with the Lagrangian $L^c$. If the point $(x,t)$ is a regular point of $v$ (i.e. there exists $v_x(x,t)$), then the value $u(x,t)=v_x(x,t)$ is given by
\begin{eqnarray}\label{value-func-CL}
u(x,t)=\int^t_0L^c_x(\gamma^\ast(s),s,\gamma^\ast{}'(s))ds+u^0(\gamma^\ast(0)),
\end{eqnarray}
 where $u^0$ is supposed to be rarefaction-free, or equivalently $v^0$ is semiconcave. (Otherwise $u^0(\gamma^\ast(0))$ needs to be replaced with $L^c_\xi(\gamma^\ast(0),0,\gamma^\ast{}'(0))$. See Lemma \ref{lemma-regularity}.) We remark that, since $v$ is Lipschitz, almost every point are regular and (\ref{value-func-CL}) is valid for almost every point.  
 
 The representation formula (\ref{value-func}) is a strong tool  not only in the analysis of (\ref{CL}) and (\ref{HJ}) but also in many applications of them to other fields such as optimal controls \cite{Fleming-Soner} and dynamical systems \cite{Fathi-book}. 
 It should be noted that {\it the variational approach to (\ref{CL}) and (\ref{HJ}) based on (\ref{value-func}) and  (\ref{value-func-CL}) also contributes approximation theories of (\ref{CL}) and (\ref{HJ}) by the vanishing viscosity method and the finite difference method.} The first case is announced by Fleming \cite{Fleming} and the latter case is the theme of this paper. 

First we recall the results of Fleming. Let us consider initial value problems of 
\begin{eqnarray}\label{CL-nu}&&
u^\nu_t+H(x,t,c+u^\nu)_x=\nu u^\nu_{xx}, \quad 
\\ \label{HJ-nu}
&& v^\nu_t+H(x,t,c+v^\nu_x )=h(c )+\nu v^\nu_{xx}\quad\,\,\, (\nu>0)
\end{eqnarray}
with the same setting as (\ref{CL}) and (\ref{HJ}).  The solutions $u^\nu$ and $v^\nu$ are also related to  calculus of variations which are not deterministic but   stochastic: The value of $v^\nu$ at each point $(x,t)\in\T\times(0,T]$ is given by 
\begin{eqnarray}\label{stochastic-value-func}
v^\nu(x,t)=\inf_{\xi^\nu\in C^1}
E\left[\int^t_0L^c(\gamma^\nu(s),s,\xi^\nu(\gamma^\nu(s),s))ds + v^0(\gamma^\nu(0)) \right]+h(c )t,
\end{eqnarray}
where $E$ stands for the expectation with respect to the Wiener measure  and $\gamma^\nu$ is a solution of the stochastic ODE
\begin{eqnarray}\label{S-ODE}
 d\gamma^\nu(s)=\xi^\nu(\gamma^\nu(s),s)ds+\sqrt{2\nu} dB(t-s),\,\,\,\, \gamma^\nu(t)=x.
 \end{eqnarray}
 Here $B$ is the standard Brownian motion. 
There exists the unique minimizing vector field $\xi^\nu{}^\ast$ of (\ref{stochastic-value-func}). The value $u^\nu(x,t)$ is given by 
\begin{eqnarray}\label{stochastic-value-func_x}
u^\nu(x,t)=
E\left[\int^t_0L^c_x(\gamma^\nu{}^\ast(s),s,\xi^\nu{}^\ast(\gamma^\nu{}^\ast(s),s))ds + u^0(\gamma^\nu{}^\ast(0)) \right],
\end{eqnarray} 
where $\gamma^\nu{}^\ast$ is a solution of (\ref{S-ODE}) with $\xi^\nu=\xi^\nu{}^\ast$. It is proved from a stochastic and variational point of view that, for $\nu\to0+$, $v^\nu$ converges uniformly to $v$ with the error $O(\sqrt{\nu})$ and $u^\nu$ converges pointwise to $u=v_x$ except for points of discontinuity of $u$. In particular, $u^\nu$ converges uniformly to $u$ without an arbitrarily small neighborhood of shocks.  The proof indicates how the stochastic variational formula (\ref{stochastic-value-func}) and   (\ref{stochastic-value-func_x}) tend to the deterministic ones (\ref{value-func}) and (\ref{value-func-CL}). Asymptotics of (\ref{S-ODE}) for $\nu\to0$ plays a central role. Fleming's approach yields much information and concrete pictures of the vanishing viscosity method. In particular we can see how the parabolicity disappears to be hyperbolic. 

The purpose of this paper is to establish a stochastic and variational approach to the finite difference method with the Lax-Friedrichs scheme. We discretize the equation of (\ref{CL}) by the Lax-Friedrichs scheme:
\begin{eqnarray}\label{CL-Delta}
 \frac{u^{k+1}_{m+1}-\frac{(u^k_{m}+u^k_{m+2})}{2}}{\dt}
+\frac{H(x_{m+2},t_k,c+u^k_{m+2})-H(x_m,t_k,c+u^k_m)}{2\dx}
=0.
\end{eqnarray}
We will see in the next section that we can find a difference equation which approximates the equation of (\ref{HJ}) and is equivalent to (\ref{CL-Delta}) in the sense that $u^k_m=(v^k_{m+1}-v^k_{m-1})/2\dx$:
\begin{eqnarray}\label{HJ-Delta}\frac{ v^{k+1}_{m} - \frac{(v^k_{m-1}+v^k_{m+1})}{2} }{\dt}+H(x_{m},t_k,c+\frac{v^k_{m+1}-v^k_{m-1}}{2\dx})
=h(c).
\end{eqnarray}
We present stochastic calculus of variations associated with (\ref{HJ-Delta}) that we minimize the expectation of a discrete action functional among  space-time inhomogeneous random walks in $\dx\Z\times\dt\Z$. This yields representation formulas of $v^k_{m+1}$ and  $u^k_m$ similar to (\ref{stochastic-value-func}) and (\ref{stochastic-value-func_x}). The probability measures of such random walks are no longer related to the Winer measure. This is the main difficulty of our arguments: We need to study the asymptotics for $\dx,\dt\to0$ of the random walks generated by arbitrary transition probabilities depending on space and time, under {\it hyperbolic scaling $0<\lambda_0\le\dt/\dx\le\lambda_1$.} We will see that the continuous limit of minimizing random walks is deterministic. In other words, we obtain the law of large numbers, where the minimizing random walks converge to the minimizing curves for  $u,v$. This proves convergence of the Lax-Friedrichs scheme. It is interesting to note that, under diffusive scaling $\dx^2/\dt=2\nu>0$,  the continuous limit of a certain class of random walks  is the Brownian motion or diffusion processes, and the solutions of (\ref{CL-Delta}) and (\ref{HJ-Delta}) converge to these of (\ref{CL-nu}) and (\ref{HJ-nu}). Our approach also yields much information and concrete pictures of the finite difference method with the Lax-Friedrichs scheme. In particular {\it we can see how the ``parabolicity'' due to numerical viscosity disappears to be hyperbolic in terms of the law of large numbers.} Here we point out several advantages due to  our approach: 
\begin{enumerate} 
 \item[(1)] Stability of the Lax-Friedrichs scheme and therefore convergence of the scheme up to arbitrary $T>0$ is derived from variational techniques.
 \item[(2)]The pointwise convergence of $u^k_m$ to $u=v_x$ is proved. In particular this yields the uniform convergence except neighborhoods of shocks with arbitrarily small measure. 
 \item[(3)] The uniform convergence of $v^k_{m+1}$ to $v$ with an error $O(\sqrt{\dx})$ is proved from the stochastic viewpoint. 
 \item[(4)] The approximation of (backward) characteristic curves of (\ref{CL}) and (\ref{HJ}) and its convergence are verified.         
\end{enumerate}

Approximation of entropy solutions with the Lax-Friedrichs (also with other schemes) is basically based on the $L^1$-frameworks with a priori estimates, where $\dx,\dt$-independent boundedness of both $u^k_m$ and its total variation must be verified. Our stochastic and variational approach is totally different from the $L^1$-frameworks  and proofs are simpler. 

In the case of flux functions which are independent of $x$ and $t$,  many details are known. Crandall and Majda \cite{Crandall-Majda} prove stability and $L^1$-convergence properties of monotone difference approximations in a rather general setting, where a flux function $H(p)$ is not necessarily convex.  Tadmor \cite{Tadmor1} shows the Lipschitz one-sided boundedness $(u^k_{m+2}-u^k_m)/2\dx\le \frac{a}{k\dt}$, which guarantees time-global stability.  \c{S}abac \cite{Sabac} proves that the optimal $L^1$-convergence rate of $u^k_m\to u$ is $O(\sqrt{\dx})$.   

In the case of flux functions which depend on both $x$ and $t$, the problem becomes much harder: Oleinik \cite{Oleinik} extensively investigate the Lax-Freidrichs scheme in this case, where stability and  $L^1$-convergence are proved with restricted $T>0$.
 This restriction is not satisfactory, because admissible $T$ must be less than $\int_{\norm u^0\norm_{L^\infty}}^\infty \frac{1}{V(r)}dr$ with 
 a monotone increasing $C^1$-function $V$ such that $\sup_{x,t\in\T,|p|\le r}|H_x(x,t,p)|\le V(r)$ ($V(r)=ar^2$ is often the case, due to $H_{pp}>0$).  Nishida and Soga \cite{Nishida-Soga} show the time-global stability of the Lax-Freidrichs scheme as well as the long time behavior that any difference solution converges exponentially to a periodic state as $k\to\infty$ yielding $\Z^2$-periodic entropy solutions. This argument is a generalization of results of Oleinik \cite{Oleinik} and Tadmor \cite{Tadmor1}.  However  they still assume that the flux function is of the form $H(x,t,p)=\frac{1}{2}p^2-F(x,t)$. It seems extremely hard to prove stability of the Lax-Friedrichs scheme for arbitrary $T>0$ with $H(x,t,p)$ satisfying (A1)-(A4), in a similar approach.    

There are also lots of works on approximation of viscosity solutions of Hamilton-Jacobi equations. Many of them are done independently of entropy solutions. We remark that, even in the case of (\ref{CL}) and (\ref{HJ}),  convergence results for viscosity solutions do not necessarily imply these for entropy solutions. We point out Crandall and Lions \cite{Crandall-Lions-approximation} and Souganidis \cite{Souganidis}, where convergence results for viscosity solutions with general schemes are established as well as error estimates of $O(\sqrt{\dx})$.  Our results provide interpretation of the order $\sqrt{\dx}$ from the stochastic viewpoint. 

Finally we refer to applications of our results to the weak KAM theory. The weak KAM theory makes clear the important connection among entropy solutions, viscosity solutions and the Hamiltonian dynamics generated by  $H(x,t,p)$. Approximation theories of the weak KAM theory should be developed with approximation methods which provide all of  entropy solutions, viscosity solutions and their characteristic curves at the same time, because the weak KAM theory requires three of them to connect the PDEs with important properties of the Hamiltonian dynamics. Fleming's results indicate that the vanishing viscosity method meets the requirement. Bessi \cite{Bessi} and other authors successfully exploits Fleming's approach to develop smooth approximation in the weak KAM theory. 
Nishida and Soga \cite{Nishida-Soga} develop difference approximation in the weak KAM theory by the Lax-Friedrichs scheme with the usual $L^1$-framework, where some arguments are not yet mathematically completed. 
Our results here indicate that the Lax-Friedrichs scheme also provides approximation to all of entropy solutions, viscosity solutions and their characteristic curves at the same time. The results of this paper can be strong tools for numerical analysis of  the weak KAM theory.   
    
\setcounter{section}{1}
\setcounter{equation}{0}
\section{Results}
\subsection{Equivalent schemes for conservation laws and Hamilton-Jacobi equations}
Let $N,K$ be natural numbers. The mesh size $\Delta=(\dx,\dt)$ is defined by $\dx:=(2N)^{-1}$ and $\dt:=(2K)^{-1}$. 
Set $\lambda:=\dt/\dx$, $x_m:=m\dx$ for $m\in\Z$ and $t_k:=k\dt$  for $k=0,1,2,\cdots$. For $x\in\R$ and $t>0$, the notation $m(x),k(t)$ denote the integers $m,k$ for which $x\in[x_m,x_m+2\dx),t\in[t_k,t_k+\dt)$. Let $(\dx\Z)\times(\dt\Z_{\ge0})$ be the set of all $(x_m,t_k)$ and $$\mathcal{G}_{even}\subset (\dx\Z)\times(\dt\Z_{\ge0}),\qquad\mathcal{G}_{odd}\subset (\dx\Z)\times(\dt\Z_{\ge0})$$ 
be the set of all $(x_m,t_k)$ with $k=0,1,2,\cdots$ and $m\in\Z$ with $m+k=$even, odd. We call $\mathcal{G}_{even}$, $\mathcal{G}_{odd}$ the even grid, odd grid.  
We consider the discretization of (\ref{CL}) by the Lax-Freidrichs scheme in $\mathcal{G}_{even}$:
\begin{eqnarray}\label{2CL-Delta}
\left\{
\begin{array}{lll}
&\dis \frac{u^{k+1}_{m+1}-\frac{(u^k_{m}+u^k_{m+2})}{2}}{\dt}
+\frac{H(x_{m+2},t_k,c+u^k_{m+2})-H(x_m,t_k,c+u^k_m)}{2\dx}
=0, \\\\\medskip\medskip
&u^0_m=u^0_\Delta(x_m),\quad u^k_{m\pm 2N}=u^k_{m}, 
\end{array}
\right.
\end{eqnarray}
where for $m=$ even 
\begin{eqnarray}\label{u^0}
u_\Delta^0(x):=\frac{1}{2\dx}\int^{x_{m}+\dx}_ {x_m-\dx}u^0(y)dy\mbox{\quad for $x\in[x_m-\dx,x_{m}+\dx)$}.
\end{eqnarray}
Note that $\dis\sum_{\{m\,|\,0\le m< 2N,\,m+k=even\}}u^k_m\cdot2\dx$ is conservative with respect to $k$ and is zero for $u^0$ with the average zero.   Now we consider a discrete version of (\ref{HJ}) in $\mathcal{G}_{odd}$:
\begin{eqnarray}\label{2HJ-Delta}\quad
\left\{
\begin{array}{lll}
&\dis \frac{ v^{k+1}_{m} - \frac{(v^k_{m-1}+v^k_{m+1})}{2} }{\dt}
+H(x_{m},t_k,c+\frac{v^k_{m+1}-v^k_{m-1}}{2\dx})
=h(c),\\\\\medskip\medskip
&v^0_{m+1}=v^0_\Delta(x_{m+1}),\quad v^k_{m+1\pm 2N}=v^k_{m+1},
\end{array}
\right.
\end{eqnarray}
where $v^0_\Delta$ is a function which converges to $v^0$ uniformly as $\Delta\to0$.   We introduce the difference operators: 
$$D_tw^{k+1}_m:=\frac{w^{k+1}_m-\frac{w^k_{m-1}+w^k_{m+1}}{2}}{\dt},\quad D_xw^{k}_{m+1}:=\frac{w^{k}_{m+1}-w^k_{m-1}}{2\dx}.$$   
In addition to the assumption $u^0=v^0_x$, we assume that  
\begin{eqnarray}\label{v^0}
v^0_\Delta(x):=v^0(0)+\int^x_0u^0_\Delta(y)dy.
\end{eqnarray}
Note that $u_\Delta^0\to u^0$ in $L^1$ and $v_\Delta^0\to v^0$ uniformly with $\norm v^0_\Delta-v^0\norm_{C^0}\le \norm u^0\norm_{L^\infty}\cdot 3\dx$, as $\Delta\to0$. The two problems (\ref{2CL-Delta}) and (\ref{2HJ-Delta}) are equivalent under (\ref{u^0}) and (\ref{v^0}): 
\begin{Prop}\label{Prop-CL-HJ}
 Let $u^k_m$ and $v^k_{m+1}$ be the solutions of (\ref{2CL-Delta}) and (\ref{2HJ-Delta}) with (\ref{u^0}) and  (\ref{v^0}). Then the one is derived from the other. In particular we have  $D_xv^k_{m+1}=u^k_m$.
\end{Prop}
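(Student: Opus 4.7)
The plan is to prove the equivalence by straightforward induction on the time index $k$, after carefully tracking the interleaved grids $\mathcal{G}_{even}$ and $\mathcal{G}_{odd}$. The key observation is that $D_x v^k_{m+1}=(v^k_{m+1}-v^k_{m-1})/(2\dx)$ lives on $\mathcal{G}_{even}$ exactly when $v^k_{m\pm 1}$ lives on $\mathcal{G}_{odd}$, so the asserted identity $u^k_m = D_x v^k_{m+1}$ is meaningful for every $(x_m,t_k)\in\mathcal{G}_{even}$.

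For the base case $k=0$, I would verify $u^0_m = D_x v^0_{m+1}$ directly from (\ref{u^0}) and (\ref{v^0}): for $m$ even, the definition of $v^0_\Delta$ yields
\[
v^0_\Delta(x_{m+1})-v^0_\Delta(x_{m-1})=\int_{x_{m-1}}^{x_{m+1}}u^0_\Delta(y)\,dy=2\dx\cdot u^0_m,
\]
since $u^0_\Delta\equiv u^0_m$ on $[x_m-\dx,x_m+\dx)$. For the inductive step, assuming $u^k_j = D_x v^k_{j+1}$ for all $j$ with $j+k$ even, I would apply the HJ scheme (\ref{2HJ-Delta}) at the two sites $j=m$ and $j=m+2$ (both permissible since $m+k$ is even), subtract the two identities, and divide by $2\dx$. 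The average part telescopes as
\[
\frac{1}{2\dx}\cdot\frac{(v^k_{m+1}+v^k_{m+3})-(v^k_{m-1}+v^k_{m+1})}{2} = \frac{1}{2}\bigl(D_x v^k_{m+3}+D_x v^k_{m+1}\bigr)=\frac{u^k_m+u^k_{m+2}}{2}
\]
by the induction hypothesis, while the $H$-terms combine into the flux difference appearing in (\ref{2CL-Delta}); the $h(c)$-terms cancel. What remains is precisely the right-hand side of the Lax-Friedrichs update, so $D_x v^{k+1}_{m+2}=u^{k+1}_{m+1}$, which advances the induction.

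For the converse direction, given a solution $u^k_m$ of (\ref{2CL-Delta}), I would define $v^k_{m+1}$ inductively across $m$ by $v^k_{m+1}:=v^k_{m-1}+2\dx\,u^k_m$ with $v^0_{m+1}:=v^0_\Delta(x_{m+1})$ as in (\ref{v^0}); the conservation of $\sum_m u^k_m\cdot 2\dx=0$ guarantees that this is consistent with the $2N$-periodicity in $m$. Then reversing the algebra above shows that this $v$ satisfies (\ref{2HJ-Delta}), since the time-update of $v^{k+1}_m$ is determined (up to a constant, fixed by telescoping) by requiring $D_x v^{k+1}_{m+2}=u^{k+1}_{m+1}$, and a direct check shows the constant matches.

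There is no real obstacle here: the content is a routine discrete analogue of the fact that differentiating a Hamilton-Jacobi equation in $x$ produces the corresponding conservation law. The only thing that requires care is the bookkeeping of parities on the two staggered grids and the verification that periodicity propagates correctly, both of which are handled by the setup in Section 2.
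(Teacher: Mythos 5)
Your proposal is correct and takes essentially the same route as the paper: the forward direction is the paper's subtraction of the Hamilton--Jacobi scheme at the two neighboring odd-grid sites (which you simply organize as an induction on $k$, with the same base-case computation from (\ref{u^0})--(\ref{v^0})), and the converse is the paper's construction of $v$ by spatial summation of $u$, where the per-time-level constant you invoke is exactly the paper's correction $-\sum_{0\le k'<k}(P^{k'}-h(c))\dt$. The only point to spell out in your converse is what "the constant matches" means: by the conservation-law scheme the residual $D_tv^{k+1}_m+H(x_m,t_k,c+D_xv^k_{m+1})-h(c)$ is independent of $m$, so adjusting the anchor value of $v^{k+1}$ to annihilate it at one site annihilates it at every site, which is precisely the content of the paper's telescoping identities for $D_t\tilde v^{k+1}_m$.
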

\subsection{Random walks for the Lax-Friedrichs scheme}
We introduce space-time inhomogeneous random walks in $\mathcal{G}_{odd}$, which corresponds to the stochastic ODE (\ref{S-ODE}) in the vanishing viscosity method or characteristic curves of (\ref{CL}). 

For each point  $(x_n,t_{l+1})\in\mathcal{G}_{odd}$, we consider backward random walks $\gamma$ which start from $x_n$ at $t_{l+1}$ and move by $\pm\dx$ in each backward time step: 
$$\gamma=\{\gamma^k\}_{k=0,1,\cdots,l+1},\quad\gamma^{l+1}=x_{n},\quad \gamma^{k+1}-\gamma^k=\pm\dx.$$
More precisely, we set the following for each $(x_n,t_{l+1})\in\mathcal{G}_{odd}$: 
\begin{eqnarray*}
&&X^k:=\{ x_m \,|\, \mbox{ $(x_m,t_k)\in\mathcal{G}_{odd}$, $|x_m-x_n|\le(l+1-k)\dx$}\},\,\,\,k\le l+1\\
&&G:=\bigcup_{1\le k\le l+1}\big(X^{k}\times\{t_{k}\}\big)\subset\mathcal{G}_{odd}, \\
&&\xi:G\ni(x_m,t_k)\mapsto\xi^k_m\in[-\lambda^{-1},\lambda^{-1}],\quad \lambda=\dt/\dx, \\
&&\bar{\bar{\rho}}: G\ni(x_m,t_k)\mapsto\bar{\bar{\rho}}^k_m:=\frac{1}{2}-\frac{1}{2}\lambda\xi^k_m\in[0,1],\\
&&\bar{\rho}: G\ni(x_m,t_k)\mapsto\bar{\rho}^k_m:=\frac{1}{2}+\frac{1}{2}\lambda\xi^k_m\in[0,1],\\
&&\gamma:\{ 0,1,2,\cdots,l+1\}\ni k\mapsto \gamma^k\in X^k,\mbox{ $\gamma^{l+1}=x_n$, $\gamma^{k+1}-\gamma^k=\pm\dx$  },\\
&&\Omega:\mbox{ the family of $\gamma$}. 
\end{eqnarray*}
We regard $\bar{\bar{\rho}}^k_m$ (respectively $\bar{\rho}^k_m$) as a transition probability from $(x_m,t_k)$ to $(x_m+\dx,t_k-\dt)$ (from $(x_m,t_k)$  to $(x_m-\dx,t_k-\dt)$).  We control the transition of our random walks by $\xi$, which plays a ``velocity field''-like role in $G$.     
We define the density of each path $\gamma\in\Omega$ as 
$$\mu(\gamma):=\prod_{1\le k\le l+1}\rho(\gamma^{k},\gamma^{k-1}),$$    
where $\rho(\gamma^{k},\gamma^{k-1})=\bar{\bar{\rho}}^k_{m(\gamma^{k})}$ (respectively $\bar{\rho}^k_{m(\gamma^{k})}$) if $\gamma^{k}-\gamma^{k-1}=-\dx$ ($\dx$).  The density $\mu(\cdot)=\mu(\cdot;\xi)$ yields a probability measure of $\Omega$, namely 
$$prob(A)=\sum_{\gamma\in A}\mu(\gamma;\xi)\mbox{\quad for $A\subset\Omega$}. $$
The expectation with respect to this probability measure is denoted by $E_{\mu(\cdot;\xi)}$, namely for a random variable $f:\Omega\to\R$
$$E_{\mu(\cdot;\xi)}[f(\gamma)]:=\sum_{\gamma\in\Omega}\mu(\gamma;\xi)f(\gamma).$$
The above objects depend on the initial point $(x_n,t_{l+1})$, but we omit to add the index to them for simpler notation.   We remark that, since our transition probabilities are space-time inhomogeneous, the well-known law of large numbers and central limit theorem for random walks do not always hold in our case. Soga {\cite{Soga2}} investigate the asymptotics for $\Delta\to0$ of the probability measure of $\Omega$ under hyperbolic scaling, which will be  important in this work.    
\subsection{Stochastic and Variational Representation of Approximate Solutions}
We represent the approximate solutions by a discrete action functional with the random walks and $L^c=L-c\xi$. From now on we assume the following: 
\medskip
 
 \noindent{\bf Assumption.} {\it Suppose (A1)-(A4). Let $T>0$ be arbitrarily fixed. The parameter $c$ varies within $[c_0,c_1]$. Initial datas are bounded: $\norm u^0\norm_{L^\infty}=\norm v^0_x\norm_{L^\infty}\le r,\,\,\,\norm v^0 \norm_{C^0}\le r$. }
 \medskip

 First of all we see a result, assuming also that there exists a solution $u^k_m$ of  (\ref{2CL-Delta}) which satisfies the stability condition called the CFL-condition
$$|H_p(x_m,t_k,c+u^k_{m})|<\lambda^{-1}\quad (\lambda=\dt/\dx).$$
\begin{Prop}\label{Main1} 
Suppose that we have the solution $v^k_m$ of (\ref{2HJ-Delta}) for which $u^k_m:=D_xv^{k}_{m+1}$ satisfies the CFL-condition for all $m$ and $k=0,1,2,\cdots,k^\ast$. 
Then the solution is represented for each $n$ and $0<l+1\le k^\ast+1$ as 
\begin{eqnarray}\label{value-func-Delta}
v^{l+1}_n=\inf_{\xi} E_{\mu(\cdot;\xi)}\Big[\sum_{0<k\le l+1}L^c(\gamma^k,t_{k-1},\xi^k_{m(\gamma^k)})\dt +v^0_\Delta(\gamma^0)\Big]+h(c)t_{l+1}.
\end{eqnarray}
 The minimizing velocity field $\xi^\ast$ exists, which is unique and given by 
 $$\xi^\ast{}^{k+1}_m=H_p(x_m,t_{k},c+D_xv^{k}_{m+1}).$$
\end{Prop}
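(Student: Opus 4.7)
The plan is to prove (\ref{value-func-Delta}) by a discrete verification argument: interpret (\ref{2HJ-Delta}) as a dynamic programming equation, extract a one-step Fenchel inequality, and iterate backwards along the random walk tree. I would first rewrite the scheme at a generic odd-grid point as
\[
v^{k+1}_m = \frac{v^k_{m-1}+v^k_{m+1}}{2} - \dt\, H(x_m, t_k, c + D_x v^k_{m+1}) + \dt\, h(c).
\]
For any admissible control $\xi^{k+1}_m \in [-\lambda^{-1}, \lambda^{-1}]$ with associated probabilities $\brho^{k+1}_m, \bbrho^{k+1}_m$, a direct computation gives
\[
\brho^{k+1}_m v^k_{m-1} + \bbrho^{k+1}_m v^k_{m+1} = \frac{v^k_{m-1}+v^k_{m+1}}{2} - \dt\, \xi^{k+1}_m\, D_x v^k_{m+1}.
\]
Legendre duality in the form $-H(x_m, t_k, c+D_x v^k_{m+1}) \le L(x_m, t_k, \xi^{k+1}_m) - \xi^{k+1}_m(c+D_x v^k_{m+1})$, combined with $L^c = L - c\xi$, then yields the one-step bound
\[
v^{k+1}_m \le \brho^{k+1}_m v^k_{m-1} + \bbrho^{k+1}_m v^k_{m+1} + \dt\, L^c(x_m, t_k, \xi^{k+1}_m) + \dt\, h(c),
\]
with equality precisely when $\xi^{k+1}_m = H_p(x_m, t_k, c + D_x v^k_{m+1})$, by the strict convexity (A2).

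Next I would iterate this inequality backwards from $(x_n, t_{l+1})$ along the binary tree of admissible paths. Since $\mu(\cdot;\xi)$ is by construction the product of the one-step probabilities $\brho^k_{m(\gamma^k)}, \bbrho^k_{m(\gamma^k)}$ along $\gamma$, applying the one-step bound layer by layer — and using the Markov structure to collapse the nested $\brho, \bbrho$-sums into a single expectation — produces, by induction on $l+1$ terminating with $v^0_m = v^0_\Delta(x_m)$,
\[
v^{l+1}_n \le E_{\mu(\cdot;\xi)}\!\left[\sum_{0 < k \le l+1} L^c(\gamma^k, t_{k-1}, \xi^k_{m(\gamma^k)})\, \dt + v^0_\Delta(\gamma^0)\right] + h(c)\, t_{l+1}
\]
for every admissible $\xi$. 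The CFL hypothesis is exactly what is needed to ensure that $\xi^{*\,k+1}_m := H_p(x_m, t_k, c + D_x v^k_{m+1})$ satisfies $|\xi^*| < \lambda^{-1}$ strictly, so $\mu(\cdot;\xi^*)$ is a genuine probability measure with $\brho^*, \bbrho^* > 0$ at every node of the backward cone; substituting $\xi^*$ saturates each one-step inequality, producing the matching lower bound and thereby both (\ref{value-func-Delta}) and the identification of the minimizer.

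For uniqueness, suppose another admissible $\xi$ attains the infimum. Then at every node of the backward cone reached with positive probability under $\mu(\cdot;\xi)$ the Fenchel inequality must already be tight, which by the strict convexity (A2) forces $\xi^{k+1}_m = H_p(x_m, t_k, c + D_x v^k_{m+1}) = \xi^{*\,k+1}_m$; a backward induction from $(x_n, t_{l+1})$ then propagates this equality across the whole cone, since $\brho, \bbrho$ remain strictly positive wherever $\xi$ has already been matched to $\xi^*$. The main technical obstacle I anticipate is the clean bookkeeping of the backward induction — verifying that the nested expressions $\brho^{k+1}_m(\ldots) + \bbrho^{k+1}_m(\ldots)$ unfold exactly into a single expectation against the product measure $\mu(\cdot;\xi)$, and that the terminal term $v^0_\Delta(\gamma^0)$ appears with the correct probability weights when paths reach level $k = 0$. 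Conceptually this is simply the Markov property of a space-time inhomogeneous Bernoulli walk, but the indices need to be tracked with care.
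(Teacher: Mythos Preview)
Your proposal is correct and follows essentially the same route as the paper: derive the one-step Fenchel inequality from the scheme and Legendre duality, iterate it backward through the tree, and identify the equality case via strict convexity and the CFL condition. The bookkeeping you flag as the main obstacle is handled in the paper by the auxiliary quantities $p^k_m=\sum_{\gamma\in\Omega^k_m}\mu(\gamma)$ and their recursion $p^{k}_m=p^{k+1}_{m-1}\bbrho^{k+1}_{m-1}+p^{k+1}_{m+1}\brho^{k+1}_{m+1}$ (Lemma~\ref{Lemma1}), which turns the nested $\brho,\bbrho$-sums into $\sum_{x\in X^k}p^k_{m(x)}(\cdots)$ and then into a single expectation over $\Omega$; this is exactly the Markov-property unfolding you anticipate.
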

\noindent This proposition is informative, because a proof indicates how the Lax-Friedrichs scheme reveals the stochastic and variational structure. The proof also implies that Proposition \ref{Main1} holds only with the assumptions (A2) and (A3).   

Next we remove the assumption of the existence of $v^k_{m+1}$ satisfying the CFL-condition:
\begin{Thm}\label{Main2} 
There exists $\lambda_1>0$ (depending  on $T$, $[c_0,c_1]$ and $r$, but independent of $\Delta$) for which we have the following:
\begin{enumerate}
\item For any small $\Delta=(\dx,\dt)$ with $\lambda=\dt/\dx<\lambda_1$, the expectation of the following functional with each $n$ and $0<l+1\le k(T)$
\begin{eqnarray*}\label{action-delta}
E^{l+1}_n(\xi):=E_{\mu(\cdot;\xi)}\Big[\sum_{0<k\le l+1}L^c(\gamma^k,t_{k-1},\xi^k_{m(\gamma^k)})\dt +v^0_\Delta(\gamma^0)\Big]+h(c)t_{l+1}
\end{eqnarray*}
has the infimum denoted by $V^{l+1}_n$ with respect to $\xi:G\to[-\lambda^{-1},\lambda^{-1}]$.   The infimum $V^{l+1}_n$ is attained by $\xi^\ast $ which satisfies $|\xi^\ast|\le\lambda_1^{-1}<\lambda^{-1}$. 
\item Define $v^{k}_{m+1}$ for each  $m$ and $0\le k\le  k(T)$ as 
$v^0_{m+1}:=v^0_\Delta(x_{m+1})$, $v^{k}_{m+1}:=V^k_{m+1}$. 
Then the minimizing velocity field $\xi^\ast$ for each $V^{l+1}_n$ satisfies in $G$ 
$$L^c_\xi(x_m,t_{k},\xi^\ast{}^{k+1}_m)=D_xv^{k}_{m+1}\Leftrightarrow 
\xi^\ast{}^{k+1}_m=H_p(x_m,t_{k},c+D_xv^{k}_{m+1}).$$  
\item The above $v^{k}_{m+1}$ satisfies (\ref{2HJ-Delta}) for $0\le k \le k(T) $.
\end{enumerate}
\end{Thm}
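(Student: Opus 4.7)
I read the variational identity of item~(1) as a discrete dynamic-programming principle, derive from it the Hamilton--Jacobi difference equation~(\ref{2HJ-Delta}), and close the induction by an a~priori Lipschitz bound on $D_xV^l$ that is uniform in $\Delta$. The proof proceeds by induction on $l$; throughout I set $v^k_{m+1}:=V^k_{m+1}$ as in item~(1).

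\emph{Dynamic programming and existence.}
Since $\mu(\gamma;\xi)$ factorises over time steps, conditioning on the first backward step from $(x_n,t_{l+1})$ and using $\bbrho^{l+1}_n+\brho^{l+1}_n=1$ together with $\brho^{l+1}_n-\bbrho^{l+1}_n=\lambda\,\xi^{l+1}_n$ yields the Bellman equation
\begin{equation*}
V^{l+1}_n=\frac{V^l_{n-1}+V^l_{n+1}}{2}+h(c)\dt+\dt\inf_{a\in[-\lambda^{-1},\lambda^{-1}]}\bigl\{L^c(x_n,t_l,a)-a\,D_xV^l_{n+1}\bigr\}.
\end{equation*}
The minimand is continuous in $a$ on a compact interval, so a minimiser $\xi^\ast{}^{l+1}_n$ exists; strict convexity $L^c_{\xi\xi}>0$ makes it unique. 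Assembling such choices grid-point by grid-point backwards in time on the cone $G$ produces a minimising field $\xi^\ast$ for $E^{l+1}_n$ and proves item~(1).

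\emph{A priori Lipschitz bound on $D_xV^l$.}
The heart of the proof is a $\Delta$-independent bound $|D_xV^l_{m+1}|\le C$. For $l=0$ this holds with $C_0=r$ by~(\ref{v^0}) and $\norm u^0\norm_{L^\infty}\le r$. For the inductive step I compare $V^{l+1}_n$ and $V^{l+1}_{n+2}$ via the shifted competitor $\tilde\xi^k_m:=\xi^\ast{}^k_{m-2}$: the translation $\gamma\mapsto\gamma+2\dx$ is a density-preserving bijection between admissible paths in the cones from $(x_n,t_{l+1})$ and $(x_{n+2},t_{l+1})$, hence
\begin{equation*}
V^{l+1}_{n+2}-V^{l+1}_n\le E_{\mu(\cdot;\xi^\ast)}\Bigl[\sum_{0<k\le l+1}\bigl(L^c(\gamma^k+2\dx,t_{k-1},\xi^\ast{}^k_{m(\gamma^k)})-L^c(\gamma^k,t_{k-1},\xi^\ast{}^k_{m(\gamma^k)})\bigr)\dt\Bigr]+2\dx\,r,
\end{equation*}
with the matching reverse inequality. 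Using (A4) the bracket is controlled by $2\dx\cdot\alpha\,E_{\mu(\cdot;\xi^\ast)}\bigl[\sum(|L^c|+|c||\xi^\ast|+1)\dt\bigr]$; the expectation of $\sum L^c\dt$ is bounded above by taking the trivial competitor $\xi\equiv0$ and below by $L^c\ge-\sup|H(\cdot,\cdot,c)|$, so a discrete Gronwall inequality gives $|D_xV^{l+1}_{n+1}|\le C_{l+1}\le C_l(1+c_\ast\dt)+c_{\ast\ast}\dt$. Iterating up to $t_{l+1}\le T$ produces a uniform constant $C=C(T,r,[c_0,c_1])$.

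\emph{CFL closure and recovery of the scheme.}
Fix $\lambda_1\in(0,1]$ so small that $|H_p(x,t,c+p)|\le\lambda_1^{-1}$ for all $(x,t)\in\T^2$, $c\in[c_0,c_1]$ and $|p|\le C$; this is possible by (A1)' and the bound just obtained. For any $\Delta$ with $\lambda<\lambda_1$, the unconstrained minimiser $a^\ast:=H_p(x_n,t_l,c+D_xV^l_{n+1})$ of $a\mapsto L^c(x_n,t_l,a)-a\,D_xV^l_{n+1}$ satisfies $|a^\ast|\le\lambda_1^{-1}<\lambda^{-1}$, hence lies in the open interval $(-\lambda^{-1},\lambda^{-1})$. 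By strict convexity it is the unique constrained minimiser, which is item~(2); substituting the minimum value $-H(x_n,t_l,c+D_xV^l_{n+1})$ into the Bellman equation above yields exactly~(\ref{2HJ-Delta}), proving item~(3).

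\emph{Principal obstacle.}
The delicate step is the uniform Lipschitz bound. The spatial-shift competitor is easy to construct, but controlling the $O(\dx)$ remainder requires a priori boundedness of $|\xi^\ast|$ and of $E_\mu[\sum|L^c|\dt]$ along minimising walks, and these can be propagated only because (A4) provides a logarithmic-Lipschitz estimate $|L_x|\le\alpha(|L|+1)$ that closes under a discrete Gronwall argument. Moreover, $\lambda_1$ must be fixed in advance from the ultimate constant $C(T)$, so that the strict CFL margin $|\xi^\ast|\le\lambda_1^{-1}<\lambda^{-1}$ is preserved throughout $[0,T]$ without circularity.
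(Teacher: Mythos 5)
Your overall route is the paper's: the Legendre/Bellman identity characterizes the minimizing field as the feedback field $\xi^\ast{}^{k+1}_m=H_p(x_m,t_{k},c+D_xV^{k}_{m+1})$ and returns (\ref{2HJ-Delta}), and the uniform bound on $D_xV^{l+1}$ is obtained exactly as in the paper, by comparing $V^{l+1}_n$ and $V^{l+1}_{n+2}$ through a spatially shifted competitor field and estimating the resulting increments of $L^c$ with (A4) together with the zero-field competitor, after which $\lambda_1$ is fixed from that bound.

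However, the quantitative step has a gap as written. Since $L^c=L-c\xi$, (A4) only gives $|L^c_x|=|L_x|\le\alpha\,(|L^c|+|c|\,|\xi|+1)$, and you never control the term $E_{\mu(\cdot;\xi^\ast)}\big[\sum |c|\,|\xi^\ast|\,\dt\big]$: the trivial bound $|\xi^\ast|\le\lambda^{-1}$ is not uniform in $\Delta$ (no lower bound $\lambda\ge\lambda_0$ is assumed in this theorem), while bounding it by $\lambda_1^{-1}$ makes your Lipschitz constant $C$ depend on $\lambda_1$ -- precisely the circularity you flag in your last paragraph but do not actually break. The discrete Gronwall recursion $C_{l+1}\le C_l(1+c_\ast\dt)+c_{\ast\ast}\dt$ does not resolve this; indeed your shift estimate produces no $C_l$ on its right-hand side, so the bound is one-shot rather than iterative, and Gronwall is superfluous. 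The paper breaks the circle before the induction starts: using superlinearity (A3)' it absorbs $|c|\,|\xi|$ into a multiple of $|L^c|+1$, fixing a constant $\alpha_1$ with $|L^c_x|\le\alpha_1(|L^c|+1)$ for all $c\in[c_0,c_1]$, so that the resulting bound $|D_xV^{l+1}|\le 1+r+\alpha_3$ depends only on $T$, $r$, $[c_0,c_1]$, and only then is $\lambda_1$ defined from this constant. With that one addition (and noting, as the paper implicitly does, that gluing optimal sub-fields in your one-step Bellman equation is legitimate because the minimizer is the grid-point feedback field and hence consistent on overlapping cones), your argument coincides with the paper's proof.
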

\noindent This theorem immediately leads to one of our main results:
 \begin{Thm}\label{Main3} 
 There exists $\lambda_1>0$ (depending  on $T$, $[c_0,c_1]$ and $r$, but independent of $\Delta$) such that for any small $\Delta=(\dx,\dt)$ with $\lambda=\dt/\dx<\lambda_1$  we have the solution $u^k_m$ of (\ref{2CL-Delta}) which is bounded and satisfies the CFL-condition up to $k=k(T)$:
$$|H_p(x_m,t_{k},c+u^{k}_{m})|\le\lambda_1^{-1}< \lambda^{-1}\mbox{.}$$ 
 \end{Thm}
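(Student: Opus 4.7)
The plan is to read off Theorem \ref{Main3} as an essentially direct consequence of Theorem \ref{Main2} together with the equivalence Proposition \ref{Prop-CL-HJ}. First I would invoke Theorem \ref{Main2} to produce, for every small $\Delta$ with $\lambda<\lambda_1$, the grid function $v^{k}_{m+1}$ on $\mathcal{G}_{odd}$ which solves (\ref{2HJ-Delta}) up to the time level $k(T)$. Then I would set
\begin{equation*}
u^k_m := D_x v^{k}_{m+1}
\end{equation*}
on $\mathcal{G}_{even}$. Since the initial data $v^0_\Delta$ is constructed from $u^0_\Delta$ via (\ref{v^0}), Proposition \ref{Prop-CL-HJ} applies and guarantees that this $u^k_m$ is precisely the solution of the Lax-Friedrichs scheme (\ref{2CL-Delta}).

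Next I would establish the CFL bound. Part (2) of Theorem \ref{Main2} states that the minimizing velocity field for $V^{l+1}_n$ satisfies
\begin{equation*}
\xi^\ast{}^{k+1}_m = H_p(x_m,t_k,c+D_x v^{k}_{m+1}) = H_p(x_m,t_k,c+u^k_m),
\end{equation*}
while part (1) asserts $|\xi^\ast|\le\lambda_1^{-1}<\lambda^{-1}$. Combining these two identities gives $|H_p(x_m,t_k,c+u^k_m)|\le\lambda_1^{-1}<\lambda^{-1}$, which is exactly the claimed CFL condition, valid uniformly in $m$ and for all $k$ with $0\le k\le k(T)$.

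Finally, for boundedness of $u^k_m$ itself, I would use assumptions (A2) and (A3) on $H$. Since $H_{pp}>0$ the map $p\mapsto H_p(x,t,p)$ is strictly increasing for each fixed $(x,t)\in\T^2$, and (A3) forces $H_p(x,t,p)\to\pm\infty$ as $p\to\pm\infty$. The uniform control $|H_p(x_m,t_k,c+u^k_m)|\le\lambda_1^{-1}$ from the previous step, combined with the continuity of $H_p$ on the compact set $\T^2\times[c_0,c_1]\times\R$ and the boundedness of $c\in[c_0,c_1]$, therefore yields a $\Delta$-independent bound $|u^k_m|\le R$, where $R$ depends only on $T$, $[c_0,c_1]$ and the data bound $r$ through $\lambda_1$ and $H$.

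There is essentially no obstacle here beyond carefully matching the quantifiers in Theorem \ref{Main2} and translating them through Proposition \ref{Prop-CL-HJ}; the only nontrivial point is the inversion step for boundedness, which is a straightforward compactness/monotonicity argument given (A2) and (A3). All the substantive work — in particular the $\Delta$-uniform $a\,priori$ bound $|\xi^\ast|\le\lambda_1^{-1}$ on the minimizer, which is really the nonlinear stability estimate — has already been carried out in Theorem \ref{Main2} via the variational techniques using (A4).
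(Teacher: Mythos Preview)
Your proposal is correct and matches the paper's own argument, which is literally the one-line remark that Theorem \ref{Main3} follows from Proposition \ref{Prop-CL-HJ} and Theorem \ref{Main2}. The only minor difference is that for boundedness of $u^k_m$ you invert the CFL inequality via (A2)--(A3), whereas the paper already has the explicit bound $|D_xV^{l+1}_{n+2}|\le 1+r+\alpha_3$ (inequality (\ref{bound})) from the proof of Theorem \ref{Main2}, which gives $|u^k_m|$ directly without any inversion step.
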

\noindent Next we ``represent'' the solution $u^k_m$ of (\ref{2CL-Delta}):
\begin{Thm}\label{Main4}
For each $n$ and $0<l+1\le k(T)$, let $\xi^\ast$ be the minimizer for $V^{l+1}_n$ and  $\gamma,\mu(\cdot;\xi^\ast)$ be the minimizing random walk for $V^{l+1}_n$. Let $\tilde{\xi}^\ast$ be the minimizer for $V^{l+1}_{n+2}$ and  $\tilde{\gamma},\tilde{\mu}(\cdot;\tilde{\xi}^\ast)$ be the minimizing random walk for $V^{l+1}_{n+2}$. Then $u^{l+1}_{n+1}$ satisfies 
\begin{eqnarray*}\label{5value-entropy}
u^{l+1}_{n+1}&\le& E_{\mu(\cdot;\xi^\ast)}\Big[\sum_{0<k\le l+1}L^c_x(\gamma^k,t_{k-1},\xi^\ast{}^k_{m(\gamma^k)})\dt+u^0_\Delta(\gamma^0+\dx) \Big]+O(\dx),
\\ \label{5value-entropy2}
u^{l+1}_{n+1}&\ge& E_{\tilde{\mu}(\cdot;\tilde{\xi}^\ast)}\Big[\sum_{0<k\le l+1}L^c_x(\tilde{\gamma}^k,t_{k-1},\tilde{\xi}^\ast{}^k_{m(\tilde{\gamma}^k)})\dt
+u^0_\Delta(\tilde{\gamma}^0-\dx) \Big]+O(\dx),
\end{eqnarray*}
where $O(\dx)$ stands for a number of $(-\theta\dx,\theta\dx)$ with $\theta>0$ independent of $\dx$.
\end{Thm}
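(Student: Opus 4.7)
The plan is to combine $v^{l+1}_n = V^{l+1}_n$, $v^{l+1}_{n+2} = V^{l+1}_{n+2}$ from Theorem \ref{Main2} with $u^{l+1}_{n+1} = D_x v^{l+1}_{n+2} = (V^{l+1}_{n+2} - V^{l+1}_n)/(2\dx)$ from Proposition \ref{Prop-CL-HJ}, reducing everything to bounding $V^{l+1}_{n+2} - V^{l+1}_n$ on both sides. The key idea is to feed each variational problem a spatial translate of the optimizer of the other: for the upper bound, let $\xi^\ast$ be the minimizer of $V^{l+1}_n$ on the backward cone $G$ based at $(x_n, t_{l+1})$ and define a test field $\eta^k_m := \xi^\ast{}^k_{m-2}$ on the translated cone based at $(x_{n+2}, t_{l+1})$. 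Since each transition probability $\brho^k_m,\bbrho^k_m$ depends only on the local value of the velocity field, the shift $\gamma \mapsto \gamma + 2\dx$ is a density-preserving bijection between walks under $\mu(\cdot; \xi^\ast)$ ending at $x_n$ and walks under $\mu(\cdot; \eta)$ ending at $x_{n+2}$.

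Subtracting $V^{l+1}_n$ from the resulting upper bound for $V^{l+1}_{n+2}$ leaves differences $L^c(\gamma^k + 2\dx, t_{k-1}, \xi^\ast{}^k_{m(\gamma^k)}) - L^c(\gamma^k, t_{k-1}, \xi^\ast{}^k_{m(\gamma^k)})$ together with a boundary contribution $v^0_\Delta(\gamma^0 + 2\dx) - v^0_\Delta(\gamma^0)$. A Taylor expansion in the first argument of $L^c$ replaces each bracketed difference by $2\dx \, L^c_x(\gamma^k, t_{k-1}, \xi^\ast{}^k_{m(\gamma^k)}) + O(\dx^2)$, with uniform $O$-constant thanks to $L\in C^2$ and the bound $|\xi^\ast| \leq \lambda_1^{-1}$ from Theorem \ref{Main2}(1); summing at most $k(T) = O(1/\dt)$ such $\dt$-weighted terms contributes $O(\dx^2)$ in total. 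For the boundary, \eqref{v^0} gives $v^0_\Delta(\gamma^0 + 2\dx) - v^0_\Delta(\gamma^0) = \int_{\gamma^0}^{\gamma^0 + 2\dx} u^0_\Delta(y)\,dy$; parity along $\gamma$ (forced by $(x_n, t_{l+1}) \in \mathcal{G}_{odd}$) makes $\gamma^0 = x_{m_0}$ with $m_0$ odd, so $[\gamma^0, \gamma^0 + 2\dx] \subset [x_{m_0+1} - \dx, x_{m_0+1} + \dx)$ on which $u^0_\Delta$ is constant by \eqref{u^0}, and the integral equals $2\dx \cdot u^0_\Delta(\gamma^0 + \dx)$ exactly. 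Dividing by $2\dx$ delivers the stated upper bound.

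The lower bound follows by the mirror construction: take $\tilde\xi^\ast$ as the minimizer of $V^{l+1}_{n+2}$ and use $\eta^k_m := \tilde\xi^\ast{}^k_{m+2}$ on the cone at $(x_n, t_{l+1})$; the analogous identity $v^0_\Delta(\tilde\gamma^0 - 2\dx) - v^0_\Delta(\tilde\gamma^0) = -2\dx \cdot u^0_\Delta(\tilde\gamma^0 - \dx)$ together with the Taylor expansion (now yielding $-2\dx \, L^c_x$) gives an upper bound on $V^{l+1}_n - V^{l+1}_{n+2}$, and dividing by $-2\dx$ flips the inequality to the claimed lower bound. The main obstacle is the combinatorial bookkeeping that legitimises the translation bijection -- checking that $\eta$ is defined on the entire shifted cone, that the parities of source and target grids on $\mathcal{G}_{odd}$ match, and that the product density $\mu(\gamma) = \prod_k \rho(\gamma^k,\gamma^{k-1})$ truly transfers site-by-site under the shift -- after which no analytic input beyond the uniform CFL-bound of Theorem \ref{Main2} and the $C^2$-regularity of $L^c$ on a compact set is required.
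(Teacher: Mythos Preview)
Your proposal is correct and follows essentially the same approach as the paper: the paper derives the lower bound for $D_xV^{l+1}_{n+2}$ as inequality (\ref{proof-Main5}) inside the proof of Theorem~\ref{Main2}, using exactly the translation $\zeta(x_m,t_k):=\tilde{\xi}^\ast(x_m+2\dx,t_k)$ together with the bijection $\Omega=\{\tilde{\gamma}-2\dx\,|\,\tilde{\gamma}\in\tilde{\Omega}\}$ and $\mu(\tilde{\gamma}-2\dx;\zeta)=\tilde{\mu}(\tilde{\gamma};\tilde{\xi}^\ast)$, then Taylor-expands to extract $L^c_x$ and identifies the boundary term as $u^0_{m(\tilde{\gamma}^0-\dx)}$; the upper bound is obtained symmetrically via $\tilde{\zeta}(x_m,t_k):=\xi^\ast(x_m-2\dx,t_k)$, and the paper's proof of Theorem~\ref{Main4} consists of a one-line reference to these computations. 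Your parity argument for the exact evaluation of $v^0_\Delta(\gamma^0+2\dx)-v^0_\Delta(\gamma^0)=2\dx\,u^0_\Delta(\gamma^0+\dx)$ makes explicit what the paper leaves implicit.
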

\subsection{Convergence of Approximation}
We present convergence results of the stochastic and variational approach to the Lax-Friedrichs scheme.  We always take the limit $\Delta=(\dx,\dt)\to0$ under hyperbolic scaling $0<\lambda_0\le\lambda=\dt/\dx<\lambda_1$.  We say that a point $(x,t)\in\T\times(0,T]$ is a regular point, if there exists $v_x(x,t)$. Note that regular points are nothing but  points of continuity of $u=v_x$  and almost every points are regular.  The minimizing curve of $v(x,t)$ is unique, if $(x,t)$ is regular (see e.g. \cite{Cannarsa}). 
\begin{Thm}\label{Main5}
Let $v$ be the viscosity solution of (\ref{HJ}) and $v_\Delta$ be the linear interpolation of the solution $v^k_{m+1}$ of (\ref{2HJ-Delta}). Then 
$$\mbox{$v_\Delta\to v$ uniformly on $\T\times[0,T]$ as $\Delta\to 0$.}$$
In particular, we have an error estimate: There exists $\beta>0$ independent of $\Delta$ such that  
$$\norm v_\Delta-v\norm_{C^0(\T\times[0,T])}\le \beta\sqrt{\dx}.$$ 
\end{Thm}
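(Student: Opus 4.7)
The plan is to reduce the claim to a node-wise comparison, at each $(x_n, t_{l+1}) \in \mathcal{G}_{odd}$ with $t_{l+1}\le T$, of the deterministic variational formula (\ref{value-func}) for $v$ against the stochastic variational representation $v^{l+1}_n = V^{l+1}_n$ from Theorem \ref{Main2}, and then to extract the order $\sqrt{\dx}$ from CLT-scale fluctuations of the random walks under hyperbolic scaling. The standard Lipschitz bound on $v$ (from (A1)--(A4) and the Assumption) together with the matching Lipschitz control on the grid values $v^{l+1}_n$ (coming from $|\xi^\ast|\le\lambda_1^{-1}$ in Theorem \ref{Main2}) makes the linear-interpolation error between $v_\Delta$ and its grid values harmless at the level $O(\dx)$, so it suffices to prove $|v^{l+1}_n - v(x_n, t_{l+1})| \le \beta'\sqrt{\dx}$ at each node. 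The technical engine throughout is the inhomogeneous LLN/CLT of Soga \cite{Soga2}: under hyperbolic scaling and a uniform drift bound strictly inside $(-\lambda^{-1},\lambda^{-1})$, a random walk on $G$ concentrates around its drift trajectory with $L^2(\mu)$-deviation $O(\sqrt{\dx})$.

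For the upper bound $v^{l+1}_n - v(x_n, t_{l+1}) \le O(\sqrt{\dx})$, fix a $C^2$ minimizer $\gamma^\ast$ of (\ref{value-func}); Tonelli regularity under (A1)$'$--(A3)$'$ yields $\| (\gamma^\ast)' \|_\infty \le M$ for some $M$ depending only on $T,[c_0,c_1],r$. Choose $\lambda_1 \ge M$ and define the test velocity field $\xi^k_m := (\gamma^\ast)'(t_{k-1}) \in [-\lambda^{-1}, \lambda^{-1}]$. Theorem \ref{Main2}(i) then gives $v^{l+1}_n \le E^{l+1}_n(\xi)$, while the random walk driven by this $\xi$ has deterministic drift $\gamma^\ast$ and step variance $O(\dx^2)$. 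Taylor expanding $L^c$ around $(\gamma^\ast(t_{k-1}), t_{k-1}, (\gamma^\ast)'(t_{k-1}))$ kills the first-order term in expectation; the quadratic remainder sums to $O(\dx)$ across the $O(\dx^{-1})$ time steps, and the initial-data contribution is controlled by $\| v^0_\Delta - v^0 \|_{C^0} \le 3r\dx$ combined with the $O(\sqrt{\dx})$ fluctuation of $\gamma^0$ around $\gamma^\ast(0)$ and the Lipschitzness of $v^0$, producing the advertised $O(\sqrt{\dx})$ bound.

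For the lower bound $v(x_n, t_{l+1}) - v^{l+1}_n \le O(\sqrt{\dx})$, a direct Jensen inequality against the convex $L^c(\cdot,\cdot,\xi)$ points the wrong way, so I take the minimizer $\xi^\ast$ of Theorem \ref{Main2}(i) and construct instead a deterministic drift curve $\bar\gamma:[0,t_{l+1}]\to\R$ by solving $\bar\gamma'(s) = \xi^\ast$ (after a suitable continuous-time interpolation of $\xi^\ast$) backward from $\bar\gamma(t_{l+1})=x_n$. Since $|\bar\gamma'|\le\lambda_1^{-1}$, the curve $\bar\gamma$ is admissible in (\ref{value-func}) and bounds $v(x_n,t_{l+1})$ from above by its continuous action. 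The LLN/CLT of \cite{Soga2} applied to $\mu(\cdot;\xi^\ast)$ gives $E_\mu|\gamma^k - \bar\gamma(t_k)|^2 = O(\dx)$; Taylor-expanding each summand of the stochastic action around the graph of $\bar\gamma$ again annihilates the linear term in expectation and leaves a quadratic remainder summing to $O(\dx)$, while $|E_\mu v^0_\Delta(\gamma^0) - v^0(\bar\gamma(0))| = O(\sqrt{\dx})$ by Lipschitzness of $v^0$ and $\|v^0_\Delta-v^0\|_{C^0}=O(\dx)$. Combining gives the desired bound. The principal obstacle is exactly this inhomogeneous fluctuation analysis: the space-time dependence of the transition probabilities means the classical LLN/CLT do not apply, and the strict-interior drift bound $|\xi^\ast|\le\lambda_1^{-1}<\lambda^{-1}$ provided by Theorem \ref{Main2} is precisely the ingredient that keeps the walks genuinely diffusive on the CLT scale $\sqrt{\dx}$ and makes the asymptotics of \cite{Soga2} applicable.
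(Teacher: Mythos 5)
Your upper bound is essentially the paper's argument (space-homogeneous test field $\xi^k_m:=\gamma^\ast{}'(t_{k-1})$, drift within $O(\dx)$ of $\gamma^\ast$, fluctuation $O(\sqrt{\dx})$ from the moment estimate of \cite{Soga2}); there a plain Lipschitz-in-$x$ bound on $L^c$ already gives $O(\sqrt{\dx})$, so the Taylor expansion is unnecessary, though harmless. The genuine gap is in your lower bound. You compare the minimizing walk $\mu(\cdot;\xi^\ast)$ with the deterministic integral curve $\bar\gamma$ of the field $\xi^\ast$ and assert $E_\mu|\gamma^k-\bar\gamma(t_k)|^2=O(\dx)$ ``by the LLN/CLT of \cite{Soga2}.'' That is not what \cite{Soga2} (Lemma \ref{eta} in the paper) gives, and it is false for general bounded velocity fields: concentration of an inhomogeneous walk around the integral curve through its starting point needs a Gronwall-type argument, i.e. Lipschitz (or one-sided Lipschitz, in the correct time direction) control of $\xi^\ast$ in $x$, and no such regularity is available here. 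The minimizer satisfies $\xi^\ast{}^{k+1}_m=H_p(x_m,t_k,c+D_xv^k_{m+1})$ with $D_xv^k_{m+1}=u^k_m$ approximating an entropy solution with shocks; run backward in time the field is expansive across a shock, so a walk started at, or within $O(\sqrt{\dx})$ of, such a point splits into two populations ending up order-one apart at time $0$, while $\bar\gamma$ follows one branch only. Even the well-posedness of $\bar\gamma'=\xi^\ast(\bar\gamma,s)$ is unclear for such a field, and at non-regular points $(x_n,t_{l+1})$ there is no single curve around which the walk concentrates; you also cannot appeal to Theorem \ref{Main6}, which is proved after, and by means of, Theorem \ref{Main5}. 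Finally, the strict-interior bound $|\xi^\ast|\le\lambda_1^{-1}<\lambda^{-1}$ is not what makes the estimate of \cite{Soga2} applicable: Lemma \ref{eta} holds for any $\xi$ with values in $[-\lambda^{-1},\lambda^{-1}]$.

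The paper's device, which your proposal is missing, is to avoid the integral curve altogether and use the path-dependent drift $\eta(\gamma)$, $\eta^k(\gamma)=\gamma^{l+1}-\sum_{k<k'\le l+1}\xi^\ast(t_{k'},\gamma^{k'})\dt$, i.e. the field integrated along the walk's own trajectory. For every realization $\gamma$, the linear interpolation $\eta_\Delta(\gamma)$ is a Lipschitz curve ending at $x_n$ with $|\eta_\Delta(\gamma)'|\le\lambda_1^{-1}$, hence admissible in (\ref{value-func}), so $v(x_n,t_{l+1})\le\mathcal{L}(\eta_\Delta(\gamma),\eta_\Delta(\gamma)')+h(c)t_{l+1}$ holds pathwise. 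Averaging this and subtracting it from $v^{l+1}_n=E_{\mu(\cdot;\xi^\ast)}[\mathcal{L}_\Delta(\gamma,\xi^\ast|_\gamma)]+h(c)t_{l+1}$, the only error terms are differences $L^c(\gamma^k,t_{k-1},\cdot)-L^c(\eta^k(\gamma),t_{k-1},\cdot)$ taken at identical velocity arguments, plus $v^0_\Delta(\gamma^0)-v^0_\Delta(\eta^0(\gamma))$ and an $O(\dx)$ interpolation error; these are controlled by Lipschitz constants times $E_{\mu(\cdot;\xi^\ast)}[|\gamma^k-\eta^k(\gamma)|]\le\sqrt{(t_{l+1}-t_k)\dx/\lambda}$, and this moment bound (Lemma \ref{eta}) requires no regularity of $\xi^\ast$ whatsoever, since $\gamma^k-\eta^k(\gamma)$ is a martingale-type sum with per-step conditional variance at most $\dx^2$. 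With this replacement your lower bound goes through and matches the paper; without it, the step $E_\mu|\gamma^k-\bar\gamma(t_k)|^2=O(\dx)$ is unjustified and in general wrong.
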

\noindent This result is consistent with the earlier literature cited in Introduction. The argument is based on the different viewpoint that the random walks become deterministic and our stochastic calculus of variations tend to the deterministic ones as $\Delta\to0$.
\begin{Thm}\label{Main6}
Let $(x,t)\in\T\times(0,T]$ be a regular point and $\gamma^\ast:[0,t]\to\R$ be the minimizing curve for $v(x,t)$. Let $(x_n,t_{l+1})$ be a point of $[x-2\dx,x+2\dx)\times[t-\dt,t+\dt)$ and $\gamma_\Delta:[0,t]\to\R$ be the linear interpolation of the random walk $\gamma$ generated by the minimizing velocity field $\xi^\ast$ for $v^{l+1}_n$. Then
$$\gamma_\Delta\to\gamma^\ast\mbox{ uniformly on $[0,t]$ in probability as  $\Delta\to0$}.$$
In particular, the average of $\gamma_\Delta$ converges uniformly to $\gamma^\ast$ as $\Delta\to0$. 
\end{Thm}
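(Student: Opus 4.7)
\textbf{Proof plan for Theorem \ref{Main6}.} The overall strategy is to prove tightness of the family of laws of $\gamma_\Delta$ on $C([0,t];\R)$, identify any subsequential weak limit as a random curve that almost surely satisfies the deterministic variational principle (\ref{value-func}) at $(x,t)$, and then invoke uniqueness of the minimizing curve at the regular point $(x,t)$ to conclude that the limit equals $\gamma^\ast$ almost surely. Because the limit is deterministic, tightness will upgrade convergence in distribution to uniform convergence in probability, and the uniform Lipschitz bound will give convergence of the mean by bounded convergence.

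First I would establish tightness. Each step of the random walk satisfies $|\gamma^{k+1}-\gamma^k|=\dx$, so $\gamma_\Delta$ is a.s.\ $\lambda^{-1}$-Lipschitz, hence $\lambda_0^{-1}$-Lipschitz uniformly in $\Delta$. Combined with $|\gamma_\Delta(t_{l+1})-x|\le 2\dx$, the Arzela--Ascoli theorem yields tightness of $\{\mathrm{Law}(\gamma_\Delta)\}$ in $C([0,t];\R)$, so along a subsequence $\gamma_{\Delta_j}\Rightarrow\gamma_\infty$ for a $\lambda_0^{-1}$-Lipschitz random curve with $\gamma_\infty(t)=x$ a.s.

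Next I would establish concentration of the walk around its drift. The elementary identities $E[\gamma^{k-1}\mid\gamma^k]=\gamma^k-\dt\,\xi^\ast{}^k_{m(\gamma^k)}$ and $\mathrm{Var}[\gamma^{k-1}\mid\gamma^k]=\dx^2(1-\lambda^2(\xi^\ast{}^k)^2)\le\dx^2$ make the compensated process
\[
M^k:=\gamma^k-x_n-\sum_{j=k+1}^{l+1}\dt\,\xi^\ast{}^j_{m(\gamma^j)}
\]
a martingale with respect to the natural (backward-in-time) filtration of the walk, with quadratic variation $\le(l+1)\dx^2\le T\dx/\lambda_0$. Doob's $L^2$-maximal inequality then gives $E[\sup_k|M^k|^2]\le 4T\dx/\lambda_0\to 0$, so the random walk is uniformly close in probability to the deterministic trajectory obtained by back-integrating the drift $\xi^\ast$. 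Using this concentration together with the uniform bound $|\xi^\ast|\le\lambda_1^{-1}$, uniform continuity of $L^c$ on $\T^2\times[-\lambda_0^{-1},\lambda_0^{-1}]$, $v^0_\Delta\to v^0$ uniformly, and Theorem \ref{Main5} applied to the representation of Theorem \ref{Main2}, I would pass to the limit to obtain
\[
v(x,t)=E\Big[\int_0^t L^c(\gamma_\infty(s),s,\gamma_\infty'(s))\,ds+v^0(\gamma_\infty(0))\Big]+h(c)t.
\]
Since $\gamma_\infty$ ends at $x$ at time $t$, (\ref{value-func}) forces the bracketed quantity to be $\ge v(x,t)-h(c)t$ for every sample path, so the expectation is $\ge v(x,t)-h(c)t$; combined with the displayed equality this forces equality almost surely. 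Hence every realization of $\gamma_\infty$ is a minimizer for $v(x,t)$, and uniqueness at the regular point gives $\gamma_\infty\equiv\gamma^\ast$ a.s.

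The hard part will be the third step, specifically the replacement of $\xi^\ast{}^k_{m(\gamma^k)}$ by $\gamma_\infty'(s)$ in the Lagrangian action during passage to the limit. The actual discrete ``speed'' of the walk is $\pm\lambda^{-1}$ and strictly exceeds the drift $|\xi^\ast|\le\lambda_1^{-1}$, so naively substituting one for the other in $L^c$ is not valid; what rescues the identification is the law-of-large-numbers concentration of $\gamma_\Delta$ around the drift-integrated curve under hyperbolic scaling (the martingale estimate above, which is the content of Soga \cite{Soga2} for these space-time inhomogeneous transition probabilities). Without this concentration $L^c(\cdot,\cdot,\xi^\ast)$ would not pass to $L^c(\cdot,\cdot,\gamma_\infty')$ in the limit and the variational identification would break down.
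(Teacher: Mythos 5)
Your route is viable but genuinely different from the paper's. The paper argues quantitatively: from $v^{l+1}_n=E[\mathcal{L}_\Delta(\gamma,\xi^\ast|_\gamma)]+h(c)t_{l+1}$, Lemma \ref{eta} and Theorem \ref{Main5} it gets $E[\mathcal{L}(\eta_\Delta(\gamma),\eta_\Delta(\gamma)')-\mathcal{L}(\gamma^\ast,\gamma^\ast{}')]=O(\sqrt{\dx})$, a Chebyshev argument then shows the drift curves are $\dx^{1/4}$-near-minimizers off a set of probability $O(\dx^{1/4})$, the stability Lemma \ref{Lem-minimizer} (uniqueness plus uniform convexity of $L^c$ in $\xi$) converts near-minimality into $C^0$- and derivative-$L^2$-closeness to $\gamma^\ast$, and Lemmas \ref{eta} and \ref{L^2-C^0} transfer this from $\eta_\Delta(\gamma)$ to $\gamma_\Delta$ itself. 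You instead use tightness/Prokhorov, identification of subsequential weak limits as a.s.\ minimizers, and uniqueness at the regular point; this avoids the interpolation Lemma \ref{L^2-C^0} and explicit rates, and your Doob maximal estimate even strengthens Lemma \ref{eta} to a sup-in-$k$ bound, while the paper's route stays elementary (no Skorokhod/portmanteau machinery) and produces quantitative probabilities such as $prob(\Omega^+)=O(\dx^{1/4})$.

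Two points need repair. First, a sign slip: since the backward step satisfies $E[\gamma^{k-1}-\gamma^k\mid\gamma^k]=-\dt\,\xi^\ast{}^k_{m(\gamma^k)}$, the martingale is $M^k=\gamma^k-\eta^k(\gamma)$ with $\eta$ as in the paper, i.e.\ the compensating sum must be \emph{added}; as you wrote it, $M^{k-1}-M^k$ has conditional mean $-2\dt\,\xi^\ast{}^k_{m(\gamma^k)}$. Second, and more substantively, the displayed identity $v(x,t)=E[\int_0^t L^c(\gamma_\infty,s,\gamma_\infty')ds+v^0(\gamma_\infty(0))]+h(c)t$ cannot be obtained by simply "passing to the limit": uniform convergence of $\eta_{\Delta_j}(\gamma)$ gives no control of derivatives, so the action is not continuous along the limit. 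What is true — and all you need — is lower semicontinuity of $\gamma\mapsto\mathcal{L}(\gamma,\gamma')$ under uniform convergence of equi-Lipschitz curves, which follows from the convexity (A2)'; after a Skorokhod coupling and Fatou this yields $E[\mathcal{L}(\gamma_\infty,\gamma_\infty')]\le v(x,t)-h(c)t$, and combined with the pathwise bound $\mathcal{L}(\gamma_\infty,\gamma_\infty')\ge v(x,t)-h(c)t$ (valid since $\gamma_\infty(t)=x$ a.s.) it forces a.s.\ minimality, so your uniqueness argument closes. State this convexity/lsc ingredient explicitly; it is the soft counterpart of the uniform convexity the paper exploits inside Lemma \ref{Lem-minimizer}. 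With these fixes, the subsequence argument, the upgrade from weak convergence to a deterministic limit to convergence in probability, and the bounded-convergence step for the averaged curve are all in order.
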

\noindent The minimizing curve $\gamma^\ast$ is the genuine backward characteristic curves of $u,v$ starting from $(x,t)$. Therefore the Lax-Friedrichs scheme turns out to approximate not only PDE solutions but also their characteristic curves at the same time.       
\begin{Thm}\label{Main7}
Let $u=v_x$ be the entropy solution of (\ref{CL}) and $u_\Delta$ be the step function derived from the solution $u^k_m$ of (\ref{2CL-Delta}), namely  $u_\Delta(x,t)=u^k_m$ for $(x,t)\in[x_m-\dx,x_m+\dx)\times[t_k,t_k+\dt)$.  Then for each regular point $(x,t)\in\T\times[0,T]$
$$u_\Delta(x,t)\to u(x,t)\mbox{ as $\Delta\to0$}.$$
In particular, $u_\Delta$ converges uniformly to $u$ on $(\T\times[0,T])\setminus\Theta$, where $\Theta$ is a neighborhood of the set of points of singularity of $u$  with  an arbitrarily small measure.    
\end{Thm}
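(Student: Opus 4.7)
The plan is to combine the sandwich representation of $u^{l+1}_{n+1}$ from Theorem \ref{Main4}, the convergence of minimizing random walks from Theorem \ref{Main6}, the uniform convergence $v_\Delta\to v$ from Theorem \ref{Main5}, and the deterministic formula (\ref{value-func-CL}) for $u$ at regular points.

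First I would establish pointwise convergence at each regular point $(x,t)$. Pick grid points $(x_n,t_{l+1})\in[x-2\dx,x+2\dx)\times[t-\dt,t+\dt)$ so that $u_\Delta(x,t)=u^{l+1}_{n+1}$. Theorem \ref{Main4} sandwiches $u^{l+1}_{n+1}$ between two expectations of the form $E_{\mu(\cdot;\xi^\ast)}[\sum L^c_x(\gamma^k,t_{k-1},\xi^{\ast k}_{m(\gamma^k)})\dt+u^0_\Delta(\gamma^0\pm\dx)]+O(\dx)$ built from the minimizing random walks for $V^{l+1}_n$ and $V^{l+1}_{n+2}$. By Theorem \ref{Main6} both walks converge in probability, uniformly on $[0,t]$, to the unique minimizing curve $\gamma^\ast$ for $v(x,t)$. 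Combining the envelope identity with the characterization $\xi^{\ast k+1}_m=H_p(x_m,t_k,c+D_xv^k_{m+1})$ from Theorem \ref{Main2} gives
\[
L^c_x(x_m,t_{k-1},\xi^{\ast k}_m)=-H_x(x_m,t_{k-1},c+D_xv^{k-1}_{m+1})=-H_x(x_m,t_{k-1},c+u^{k-1}_m),
\]
and similarly $L^c_x(\gamma^\ast(s),s,(\gamma^\ast)'(s))=-H_x(\gamma^\ast(s),s,c+u(\gamma^\ast(s),s))$ along the limit.

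Second I would pass to the limit in the Riemann sums using dominated convergence, with the uniform bounds $|u^k_m|,|\xi^\ast|\le\lambda_1^{-1}$ from Theorem \ref{Main3} and continuity of $H_x$. The boundary term $u^0_\Delta(\gamma^0\pm\dx)$ tends to $u^0(\gamma^\ast(0))$ by (\ref{u^0}) whenever $u^0$ is continuous at $\gamma^\ast(0)$; otherwise Lemma \ref{lemma-regularity} substitutes $L^c_\xi(\gamma^\ast(0),0,(\gamma^\ast)'(0))$. Both sides of the sandwich thus collapse to $\int_0^t L^c_x(\gamma^\ast(s),s,(\gamma^\ast)'(s))\,ds+u^0(\gamma^\ast(0))=u(x,t)$ by (\ref{value-func-CL}), giving pointwise convergence.

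For uniform convergence off an arbitrarily small neighborhood of the shock set, the singular set $\Sigma$ of $u$ has Lebesgue measure zero and $\{u_\Delta\}$ is uniformly bounded in $L^\infty$. Along any sequence $\Delta_n\to0$, Egorov's theorem produces, for every $\eta>0$, a measurable set $E$ with $\mes(E)<\eta/2$ off which convergence is uniform; an open neighborhood $\Theta\supset\Sigma\cup E$ with $\mes(\Theta)<\eta$ by outer regularity of Lebesgue measure yields the claim. The main obstacle is the circular appearance of $u^{k-1}_m$ in the integrand: convergence of the Riemann sum requires the discrete $u^{k-1}_{m}$ to track $u$ along $\gamma^\ast$, which is the very statement being proved. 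This is resolved by invoking Theorem \ref{Main5} together with the semiconcavity of $v$ along backward characteristics, which implies that on a $C^2$-region of $v$ traversed by $\gamma^\ast$ the uniform $C^0$-closeness of $v_\Delta$ and $v$ upgrades to local convergence of $D_xv_\Delta$ to $v_x$, closing the argument.
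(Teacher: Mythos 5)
Your overall architecture (sandwich from Theorem \ref{Main4}, convergence of minimizing walks from Theorem \ref{Main6}, formula (\ref{value-func-CL}) at regular points, Egorov for the ``uniform off a small set'' statement) matches the paper's, but the step on which everything hinges has a genuine gap. By substituting $\xi^\ast{}^{k}_m=H_p(x_m,t_{k-1},c+u^{k-1}_m)$ and rewriting the integrand as $-H_x(x_m,t_{k-1},c+u^{k-1}_m)$ you reintroduce the unknown discrete solution into the Riemann sum, and your proposed escape from the circularity does not hold up: the exact solution $v$ is in general not $C^2$ (nor even $C^1$) in any neighborhood of the backward characteristic $\gamma^\ast$ --- Lemma \ref{lemma-regularity} only gives differentiability of $v$ \emph{at points on the curve} --- and, more fundamentally, uniform $C^0$-convergence $v_\Delta\to v$ from Theorem \ref{Main5} never upgrades to convergence of the discrete difference quotients $D_xv_\Delta$ unless one has a one-sided (semiconcavity/Oleinik--Tadmor type) estimate on the \emph{discrete} solutions, which this paper neither proves nor uses; semiconcavity of the limit $v$ alone says nothing about $D_xv_\Delta$. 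The paper avoids this entirely: it never converts $L^c_x(\gamma^k,t_{k-1},\xi^\ast{}^k_{m(\gamma^k)})$ into an expression in $u^{k-1}_m$, but instead replaces the walk $\gamma$ by its deterministic shadow $\eta_\Delta(\gamma)$ (error $O(\sqrt{\dx})$ by Lemma \ref{eta}), so that the sum becomes $\int_0^t L_x(\eta_\Delta(\gamma)(s),s,\eta_\Delta(\gamma)'(s))\,ds$ with the velocity field appearing only through $\eta_\Delta(\gamma)'$, and then passes to the limit using the convergence $\eta_\Delta(\gamma)\to\gamma^\ast$ in $C^0$ and $\eta_\Delta(\gamma)'\to\gamma^\ast{}'$ in $L^2$ in probability established via Lemma \ref{Lem-minimizer} in the proof of Theorem \ref{Main6}.

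A second, smaller gap is your treatment of the non-rarefaction-free case. Saying that ``Lemma \ref{lemma-regularity} substitutes $L^c_\xi(\gamma^\ast(0),0,\gamma^\ast{}'(0))$'' fixes the continuous formula, but the discrete sandwich still carries the boundary term $u^0_\Delta(\gamma^0\pm\dx)$; when $u^0$ jumps up at $\gamma^\ast(0)$ the walks' endpoints concentrate exactly at that jump, so $u^0_\Delta(\gamma^0\pm\dx)$ need not converge to the substituted value (or to anything). The paper resolves this by truncating the sum at an intermediate time $\tau>0$, using the identity $D_xv^{k(\tau)}_{m(\gamma^{k(\tau)})+1}=L^c_\xi(\gamma^{k(\tau)},t_{k(\tau)},\xi^\ast{}^{k(\tau)+1}_{m(\gamma^{k(\tau)})})$ from Theorem \ref{Main2}, and then choosing $\tau^\ast$ where the averaged discrete velocity is close to $\gamma^\ast{}'(\tau^\ast)$ thanks to the $L^2$-convergence; this device is missing from your proposal and is needed to make the initial/intermediate term converge.
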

\noindent This convergence result is stronger than the one derived from the usual $L^1$-framework in the following sense: The approximate solution $u_\Delta$ converges pointwise a.e. and therefore ``uniformly'' to the particular representative element of the $L^1$-valued entropy solution, which is the derivative of the corresponding viscosity solution and is represented as (\ref{value-func-CL}). 
\setcounter{section}{2}
\setcounter{equation}{0}
\section{Proof of Results}
We prove the propositions and theorems in Section 2. 
\begin{proof}[{\bf Proof of Proposition \ref{Prop-CL-HJ}.}] 
It follows from (\ref{u^0}) and (\ref{v^0}) that $D_xv^0_{m+1}=u^0_m$. Subtracting $D_tv^{k+1}_{m}+H(x_{m},t_k,D_xv^k_{m+1})=h(c)$ from $D_tv^{k+1}_{m+2}+H(x_{m+2},t_k,D_xv^k_{m+3})=h(c)$ and dividing it by $2\dx$, we see that $D_xv^k_{m+1}$ is equal to $u^k_m$. 
 
 Converse argument: We inductively define $\tilde{v}^k_{m+1}$ in $\mathcal{G}_{odd}$ as
$$\tilde{v}^k_{m+1}=\tilde{v}^k_{m-1}+u^k_m\cdot2\dx,\quad
\tilde{v}^k_{-(m+1)}=\tilde{v}^k_{-(m-1)}+u^k_{-m}\cdot(-2\dx)$$
with $\tilde{v}^k_{1}:=u^k_0\dx$ for $k=$ even and $\tilde{v}^k_{0}:=0$ for $k=$ odd. Then $\tilde{v}^k_{m+1}$ satisfies $D_x\tilde{v}^k_{m+1}=u^k_{m}$, $\tilde{v}^k_{m+1\pm 2N}=\tilde{v}^k_{m+1}$ and  $\tilde{v}^0_{m+1}=v_\Delta^0(x_{m+1})-v^0(0)$. Using the equation of (\ref{2CL-Delta}) and the equalities
\begin{eqnarray*}
&&D_t\tilde{v}^{k+1}_m=D_t\tilde{v}^{k+1}_0+\left\{ D_tu^{k+1}_1+D_tu^{k+1}_3+\cdots+D_tu^{k+1}_{m-1}  \right\}2\dx\mbox{\quad for $k=even$},\\
&&D_t\tilde{v}^{k+1}_m=D_t\tilde{v}^{k+1}_1+\left\{ D_tu^{k+1}_2+D_tu^{k+1}_4+\cdots+D_tu^{k+1}_{m-1}  \right\}2\dx\mbox{\quad for $k=odd$}
\end{eqnarray*}
(note that $D_t\tilde{v}^{k+1}_0=0$), we obtain 
\begin{eqnarray*}
D_t\tilde{v}^{k+1}_{m} +H(x_{m},t_k,c+D_x\tilde{v}^k_{m+1})=P^k,
\end{eqnarray*}
where for $k=\mbox{even, odd}$
\begin{eqnarray*}
P^k= D_t\tilde{v}^{k+1}_{0}+H(x_{0},t_k,c+D_x\tilde{v}^k_{1}),
\quad P^k=D_t \tilde{v}^{k+1}_{1}+H(x_{1},t_k,c+D_x\tilde{v}^k_{2}).
\end{eqnarray*}
We define $v^k_{m+1}$ as 
$$v^0_{m+1}:=\tilde{v}^0_{m+1}+v^0(0),\quad v^k_{m+1}:=\tilde{v}^k_{m+1}-\sum_{0\le k'<k }(P^{k'}-h(c))\dt.$$
Then $v^k_{m+1}$ satisfies (\ref{2HJ-Delta}) and $D_xv^k_{m+1}=u^k_{m}$. 
\end{proof}
Define $\Omega^k_m:=\{\gamma\in\Omega\,|\, \gamma^k=x_m\}$ and $p^k_m:=\sum_{\gamma\in\Omega^k_m}\mu(\gamma)$. We observe the following lemma, which follows from the definition of random walks. 
\begin{Lemma}\label{Lemma1}
 \begin{enumerate}
 \item $\dis\sum_{x\in X^k} p^k_{m(x)}=1$. Hence $\{p^k_{m(x)}\}_{x\in X^k}$ yields a probability of $X^k$.
\item $\dis p^{k}_m=p^{k+1}_{m-1}\bbrho^{k+1}_{m-1}+p^{k+1}_{m+1}\brho^{k+1}_{m+1}$, where $\bbrho^{k+1}_{m\pm1},\brho^{k+1}_{m\pm1}=0$ if $x_{m\pm1}\not\in\ X^{k+1}$.
 \end{enumerate}
\end{Lemma}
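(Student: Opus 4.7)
The plan is to prove Part (2) directly from the definition of $\mu(\gamma)$ and then derive Part (1) by backward induction on $k$ using Part (2). Both statements are consequences of the Markov property of the random walk, made concrete by repeatedly invoking the identity $\bbrho^{j}_{m}+\brho^{j}_{m}=1$ (which holds at every node of $G$) to telescope products of the form $\prod\rho(\gamma^j,\gamma^{j-1})$.

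For Part (2), I partition $\Omega^k_m$ according to the value of $\gamma^{k+1}\in\{x_{m-1},x_{m+1}\}$. In the subclass with $\gamma^{k+1}=x_{m-1}$, the single factor $\rho(\gamma^{k+1},\gamma^k)=\bbrho^{k+1}_{m-1}$ is the same for every path in the subclass and can be pulled out of the sum. The remaining product $\prod_{j\neq k+1}\rho(\gamma^j,\gamma^{j-1})$ splits into two independent blocks: the lower block $\prod_{j=1}^{k}\rho(\gamma^j,\gamma^{j-1})$ sums to $1$ over all free choices of $\gamma^0,\dots,\gamma^{k-1}$ by successive telescoping from $j=1$ upward, while the upper block $\prod_{j=k+2}^{l+1}\rho(\gamma^j,\gamma^{j-1})$ sums (over $\gamma^{k+2},\dots,\gamma^l$, with $\gamma^{k+1}=x_{m-1}$ and $\gamma^{l+1}=x_n$ held fixed) to exactly $p^{k+1}_{m-1}$; this last identification follows by applying the same lower/upper decomposition to $p^{k+1}_{m-1}$ itself. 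The subclass therefore contributes $\bbrho^{k+1}_{m-1}\,p^{k+1}_{m-1}$, and the symmetric computation in the other subclass produces $\brho^{k+1}_{m+1}\,p^{k+1}_{m+1}$. Adding yields the recurrence.

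For Part (1), the base case $k=l+1$ is immediate: $X^{l+1}=\{x_n\}$, and $p^{l+1}_n=\sum_{\gamma\in\Omega}\mu(\gamma)=1$ by the same telescoping argument applied to the full path. For the inductive step, assuming $\sum_{x\in X^{k+1}}p^{k+1}_{m(x)}=1$, I substitute the recurrence from Part (2) into $\sum_{x_m\in X^k}p^k_m$ and regroup by the intermediate index $m'$. Each $p^{k+1}_{m'}$ with $x_{m'}\in X^{k+1}$ receives the coefficient $\bbrho^{k+1}_{m'}$ from the term $x_m=x_{m'+1}$ and $\brho^{k+1}_{m'}$ from $x_m=x_{m'-1}$; since $|x_{m'}-x_n|\le(l-k)\dx$ forces $|x_{m'\pm1}-x_n|\le(l+1-k)\dx$, both neighbors lie in $X^k$, so the total coefficient of $p^{k+1}_{m'}$ is $\bbrho^{k+1}_{m'}+\brho^{k+1}_{m'}=1$. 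The inductive hypothesis then yields $\sum_{x\in X^k}p^k_{m(x)}=1$.

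The only subtle points are (i) when a neighbor $x_{m\pm1}$ lies outside $X^{k+1}$ because $x_m$ sits on the boundary of $X^k$, the constraints $\gamma^{l+1}=x_n$ and $|\gamma^j-\gamma^{j-1}|=\dx$ together force every path in $\Omega^k_m$ to move toward the interior, so no path actually transits through that outside neighbor; the convention $\bbrho^{k+1}_{\cdot}=\brho^{k+1}_{\cdot}=0$ outside $X^{k+1}$ is thus automatically consistent with the combinatorics, and (ii) keeping the index bookkeeping straight when decomposing $\mu(\gamma)$ into its distinguished factor, lower block, and upper block. Neither presents a substantial obstacle---as the authors remark, the lemma truly ``follows from the definition of random walks.''
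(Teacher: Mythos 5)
Your proof is correct, and since the paper offers no written proof of this lemma (it is stated as following directly from the definition of the random walks), your argument is exactly the intended one: the normalization $\bbrho^{j}_{m}+\brho^{j}_{m}=1$ telescopes the lower block to give total mass one, and conditioning on $\gamma^{k+1}$ gives the Chapman--Kolmogorov recurrence, with the boundary convention handled correctly. Nothing further is needed.
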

\begin{proof}[{\bf Proof of Proposition \ref{Main1}.}]
Fix $\xi:G\to[-\lambda^{-1},\lambda^{-1}]$ arbitrarily. It follows form the difference equation of (\ref{2HJ-Delta}) and the property of the Legendre transform that
\begin{eqnarray*}
v^{l+1}_n&=&\frac{v^l_{n-1}+v^l_{n+1}}{2}-H(x_n,t_l,c+D_xv^l_{n+1})\dt+h(c)\dt\\
&=& \{\xi^{l+1}_n\cdot(c+D_xv^l_{n+1})-H(x_n,t_l,c+D_xv^l_{n+1})\}\dt-c\xi^{l+1}_n\dt\\
&&+(\frac{1}{2}+\frac{1}{2}\lambda\xi^{l+1}_n)v^l_{n-1}+(\frac{1}{2}-\frac{1}{2}\lambda\xi^{l+1}_n)v^l_{n+1}+h(c)\dt\\
&\le&L^c(x_n,t_{l},\xi^{l+1}_n)\dt+(\frac{1}{2}+\frac{1}{2}\lambda\xi^{l+1}_n)v^l_{n-1}+(\frac{1}{2}-\frac{1}{2}\lambda\xi^{l+1}_n)v^l_{n+1}+h(c)\dt,
\end{eqnarray*}
where the equality holds, if and only if  $\xi^{l+1}_n=H_p(x_n,t_{l},c+D_xv^{l}_{n+1})\in (-\lambda^{-1},\lambda^{-1})$. Similarly we have 
\begin{eqnarray*}
v^l_{n-1}
&\le&L^c(x_{n-1},t_{l-1},\xi^{l}_{n-1})\dt
+(\frac{1}{2}+\frac{1}{2}\lambda\xi^{l}_{n-1})v^{l-1}_{n-2}+(\frac{1}{2}-\frac{1}{2}\lambda\xi^{l}_{n-1})v^{l-1}_{n}+h(c)\dt,\\
v^l_{n+1}
&\le&L^c(x_{n+1},t_{l-1},\xi^{l}_{n+1})\dt
+(\frac{1}{2}+\frac{1}{2}\lambda\xi^{l}_{n+1})v^{l-1}_{n}+(\frac{1}{2}-\frac{1}{2}\lambda\xi^{l}_{n+1})v^{l-1}_{n+2}+h(c)\dt,
\end{eqnarray*} 
where the equality holds, if and only if  $\xi^{l}_{n\pm1}=H_p(x_{n\pm1},t_{l-1},c+D_xv^{l-1}_{n\pm1+1})\in (-\lambda^{-1},\lambda^{-1})$. Hence, by Lemma \ref{Lemma1},  we obtain
\begin{eqnarray*} 
v^{l+1}_n\le\sum_{l\le k\le l+1}\left(\sum_{x\in X^k}p^k_{m(x)}L^c(x,t_{k-1},\xi^{k}_{m(x)}) \right)\dt+\sum_{x\in X^{l-1}}p^{l-1}_{m(x)}v^{l-1}_{m(x)} +h(c)(t_{l+1}-t_{l-1}). 
\end{eqnarray*}
Continuing this process, we obtain
\begin{eqnarray*} 
v^{l+1}_n\le\sum_{0< k\le l+1}\left(\sum_{x\in X^k}p^k_{m(x)}L^c(x,t_{k-1},\xi^{k}_{m(x)}) \right)\dt+\sum_{x\in X^{0}}p^{0}_{m(x)}v^0_{m(x)} +h(c)t_{l+1}.
\end{eqnarray*}
The equality holds, if and only if  $\xi^k_m=H_p(x_m,t_{k-1},c+D_xv^{k-1}_{m+1})\in (-\lambda^{-1},\lambda^{-1})$.  We see that the first and second term of the right hand side, denoted by $A_1$ and $A_2$, are changed into  
\begin{eqnarray*} 
A_1&=&\sum_{0< k\le l+1}\Big\{
\sum_{x\in X^k}\Big(\sum_{\gamma\in\Omega^k_{m(x)}}\mu(\gamma;\xi)\Big)
L^c(\gamma^k,t_{k-1},\xi^{k}_{m(\gamma^k)}) \Big\}\dt\\
&=&\sum_{0< k\le l+1}\Big(
\sum_{\gamma\in\Omega}\mu(\gamma;\xi)
L^c(\gamma^k,t_{k-1},\xi^{k}_{m(\gamma^k)}) \Big)\dt\\
&=&\sum_{\gamma\in\Omega}\mu(\gamma;\xi)\Big(
\sum_{0< k\le l+1}
L^c(\gamma^k,t_{k-1},\xi^{k}_{m(\gamma^k)})\dt \Big),\\
A_2&=&\sum_{x\in X^{0}}\Big(\sum_{\gamma\in\Omega^0_{m(x)}}\mu(\gamma;\xi)\Big)v^0_{m(\gamma^0)}=\sum_{\gamma\in\Omega}\mu(\gamma;\xi)v^0_{m(\gamma^0)}.
\end{eqnarray*}
$\xi$ is arbitrary and we conclude (\ref{value-func-Delta}). 
\end{proof}
For simplicity, we introduce the following notation: $\xi|_{\gamma}:k\mapsto\xi^k_{m(\gamma^k)}$,
\begin{eqnarray*}
\mathcal{L}(\gamma,\eta)&:=&\int^t_0L^c(\gamma(s),s,\eta(s))ds+v^0(\gamma(0)):AC([0,t])\times L^1([0,t])\to\R,\\
\mathcal{L}_\Delta(\gamma,\eta)&:=&\sum_{0<k\le l+1}L^c(\gamma^k,t_{k-1},\eta^k)\dt+v^0_\Delta(\gamma^0):D\times D\to\R,
\end{eqnarray*}
where $D$ is the set of functions: $\{0,1,2,\cdots,l+1\}\to \R.$ 
\begin{proof}[{\bf Proof of Theorem \ref{Main2}.}]
It follows from (A1)-(A4) that  there exists $\alpha_1$ for which we have $|L^c_x|\le\alpha_1(|L^c|+1)$ for any $c\in[c_0,c_1]$ and $\dis L_\ast:=|\min\{0,\inf_{x,t,\xi,c}L^c\}|$ is bounded.  
Due to the lower boundedness of $L^c$, $E^{l+1}_n(\xi)$ has the infimum $V^{l+1}_n$ with respect to $\xi:G\to[-\lambda^{-1},\lambda^{-1}]$ for any $\Delta$. Note that, if $\xi^\ast$ is a minimizer for $V^{l+1}_n$, then for each $(x_m,t_k)\in G$ $V^k_m$ is attained by $\tilde{\xi}^\ast:\tilde{G}\subset G\to[-\lambda^{-1},\lambda^{-1}]$ which is the restriction of $\xi^\ast$ to $\tilde{G}$ whose vertex is $(x_m,t_k)$. Introduce  
\begin{eqnarray}\nonumber
&&\alpha_2:=r+T\max_{x,t}L(x,t,0),\,\,\,\,\,\alpha_3:=\alpha_1\{(1+2L_\ast)T+\alpha_2+r\},\\\label{lambda_1}
&&\lambda_1:=\max\{\max_{x,t}|H_p(x,t,c_1+1+r+\alpha_3)|,\,\,\max_{x,t}|H_p(x,t,c_0-1-r-\alpha_3)|\},\\\nonumber
&&\theta:=T\max_{x,t,|\xi|\le\lambda_1^{-1}} L_{xx}(x,t,\xi).
\end{eqnarray}

Now we assume that, for each $n$ and some $l$ such that $0\le l<k(T)$, the minimizer $\xi^\ast$ for $V^{l+1}_n$ satisfies $|\xi^\ast|\le\lambda_1^{-1}$. This is true for $l=0$, because of the boundedness $|D_xv^0_{m+1}|\le r$ and Proposition \ref{Main1}.  We want to prove the following estimate under $\dx\theta\le 1$:
\begin{eqnarray}\label{bound}
|D_xV^{l+1}_{n+2}|=\left|\frac{V^{l+1}_{n+2}-V^{l+1}_n}{2\dx}\right|\le 1+r+\alpha_3.
\end{eqnarray}
Before proving, we observe the result from (\ref{bound}): Consider $E^{l+2}_{n+1}(\xi)$ with $\xi$ whose restriction to $k\le l+1$ coincides with the minimizers for $V^{l+1}_n$ and $V^{l+1}_{n+2}$. Then, using the property of the Legendre transform,  we have for any $\xi^{l+2}_{n+1}$
\begin{eqnarray}\label{proof-3}
E^{l+2}_{n+1}(\xi)&=&L^c(x_{n+1},t_{l+1},\xi^{l+2}_{n+1})\dt+\brho^{l+2}_{n+1} V^{l+1}_{n}+\bbrho^{l+2}_{n+1} V^{l+1}_{n+2}+h(c)\dt\\\nonumber
&\ge&\{\xi^{l+2}_{n+1}\cdot(c+D_xV^{l+1}_{n+2})-H(x_{n+1},t_{l+1},c+D_xV^{l+1}_{n+2})\}\dt-c\xi^{l+2}_{n+1}\dt\\\nonumber
&&+(\frac{1}{2}+\frac{\lambda}{2}\xi^{l+2}_{n+1}) V^{l+1}_{n}+(\frac{1}{2}-\frac{\lambda}{2}\xi^{l+2}_{n+1}) V^{l+1}_{n+2}+h(c)\dt\\\nonumber
&=&\frac{V^{l+1}_{n}+V^{l+1}_{n+2}}{2}-H(x_{n+1},t_{l+1},c+D_xV^{l+1}_{n+2})\dt+h(c)\dt,
\end{eqnarray}
where the equality holds, if and only if $\xi^{l+2}_{n+1}=H_p(x_{n+1},t_{l+1},c+D_xV^{l+1}_{n+2})$. Therefore the minimizer $\xi^\ast$ for $V^{l+2}_{n+1}$ exists and satisfies 
\begin{eqnarray*}\label{proof-2}
\xi^\ast{}^{l+2}_{n+1}=H_p(x_{n+1},t_{l+1},c+D_xV^{l+1}_{n+2}).
\end{eqnarray*}
In particular, we have $|\xi^\ast{}^{l+2}_{n+1}|\le\lambda^{-1}_1$ and $|\xi^\ast|\le\lambda^{-1}_1$ because of (\ref{bound}). Thus, by induction, we complete the proof of 1. and 2.

We prove (\ref{bound}): Let $\xi^\ast$ be the minimizer for $V^{l+1}_n$ and $\mu,\gamma,\Omega$, etc. be the notation of the random walks starting from $(x_n,t_{l+1})$. Let $\tilde{\xi}^\ast$ be the minimizer for $V^{l+1}_{n+2}$ and  $\tilde{\mu},\tilde{\gamma},\tilde{\Omega}$, etc. be the notation of the random walks starting from $(x_{n+2},t_{l+1})$. We take $\mu(\cdot;\zeta)$ with $\zeta(x_m,t_k):=\tilde{\xi}^\ast(x_m+2\dx,t_k)$. Since $V^{l+1}_n$ is the infimum, we have  
 \begin{eqnarray*}
V^{l+1}_{n}\le E_{\mu(\cdot;\zeta)}\Big[\mathcal{L}_\Delta(\gamma,\zeta|_\gamma)\Big]+h(c)t_{l+1},
\quad V^{l+1}_{n+2}= E_{\tilde{\mu}(\cdot;\tilde{\xi}^\ast)}\Big[\mathcal{L}_\Delta(\tilde{\gamma},\tilde{\xi}^\ast|_{\tilde{\gamma}})\Big]+h(c)t_{l+1}.
\end{eqnarray*}
Since  $\Omega=\{\tilde{\gamma}-2\dx\,|\,\tilde{\gamma}\in\tilde{\Omega}\}$ and $\mu(\tilde{\gamma}-2\dx;\zeta)=\tilde{\mu}(\tilde{\gamma};\tilde{\xi}^\ast)$, we have
\begin{eqnarray*}
V^{l+1}_{n}&\le& E_{\mu(\cdot;\zeta)}\Big[\mathcal{L}_\Delta(\gamma,\zeta|_\gamma)\Big]+h(c)t_{l+1}\\
&=&\sum_{\tilde{\gamma}\in{\tilde{\Omega}}}
\tilde{\mu}(\tilde{\gamma};\tilde{\xi}^\ast)\Big[\sum_{0< k\le l+1}L^c(\tilde{\gamma}^k-2\dx,t_{k-1},\tilde{\xi}^\ast{}^k_{m(\tilde{\gamma}^k)})\dt+v^0_{m(\tilde{\gamma}^0-2\dx)}  \Big]+h(c)t_{l+1}.
\end{eqnarray*}
Therefore we obtain
\begin{eqnarray}\label{proof-Main5}
D_xV^{l+1}_{n+2}
&\ge& \frac{E_{\tilde{\mu}(\cdot;\tilde{\xi}^\ast)}\Big[\mathcal{L}_\Delta(\tilde{\gamma},\tilde{\xi}^\ast|_{\tilde{\gamma}})\Big]-E_{\mu(\cdot;\zeta)}\Big[\mathcal{L}_\Delta(\gamma,\zeta|_\gamma)\Big]}{2\dx}\\\nonumber
&=&E_{\tilde{\mu}(\cdot;\tilde{\xi}^\ast)}\Big[\sum_{0< k\le l+1}\frac{1}{2\dx} 
\{L^c(\tilde{\gamma}^k,t_{k-1},\tilde{\xi}^\ast{}^k_{m(\tilde{\gamma}^k)})\\\nonumber
&&-L^c(\tilde{\gamma}^k-2\dx,t_{k-1},\tilde{\xi}^\ast{}^k_{m(\tilde{\gamma}^k)}) \} \dt +\frac{v^0_{m(\tilde{\gamma}^0)}-v^0_{m(\tilde{\gamma}^0-2\dx)}}{2\dx}\Big]\\\nonumber
&=&E_{\tilde{\mu}(\cdot;\tilde{\xi}^\ast)}\Big[ \sum_{0< k\le l+1}
L^c_x(\tilde{\gamma}^k,t_{k-1},\tilde{\xi}^\ast{}^k_{m(\tilde{\gamma}^k)})\dt 
+u^0_{m(\tilde{\gamma}^0-\dx)}\Big]+O(\dx).
\end{eqnarray}
Using (A4) and noting that $\tilde{\xi}^\ast$ is the minimizer, we see that the fourth line of (\ref{proof-Main5}) denoted by $A$ is bounded from the  below by $-(1+r+\alpha_3)$: 
\begin{eqnarray*}
A&\ge& -E_{\tilde{\mu}(\cdot;\tilde{\xi}^\ast)}\Big[ \sum_{0< k\le l+1}
\alpha_1\big(1+|L^c(\tilde{\gamma}^k,t_{k-1},\tilde{\xi}^\ast{}^k_{m(\tilde{\gamma}^k)})|\big)\dt\Big]-r-\theta\dx\\
&=&-E_{\tilde{\mu}(\cdot;\tilde{\xi}^\ast)}\Big[ \sum_{0< k\le l+1}
\alpha_1\big(1+|L^c(\tilde{\gamma}^k,t_{k-1},\tilde{\xi}^\ast{}^k_{m(\tilde{\gamma}^k)})+L_\ast-L_\ast|\big)\dt\Big]-r-\theta\dx\\
&\ge&-E_{\tilde{\mu}(\cdot;\tilde{\xi}^\ast)}\Big[ \sum_{0< k\le l+1}
\alpha_1\big(1+L^c(\tilde{\gamma}^k,t_{k-1},\tilde{\xi}^\ast{}^k_{m(\tilde{\gamma}^k)})+L_\ast\big)\dt\Big]-\alpha_1L_\ast T-r-1\\
&=&-\alpha_1E_{\tilde{\mu}(\cdot;\tilde{\xi}^\ast)}\Big[\mathcal{L}_\Delta(\tilde{\gamma},\tilde{\xi}^\ast|_{\tilde{\gamma}})\Big]+\alpha_1 E_{\tilde{\mu}(\cdot;\tilde{\xi}^\ast)}\Big[v^0_\Delta(\tilde{\gamma}^0)\Big]-\alpha_1T-2\alpha_1L_\ast T-r-1\\
&\ge&-\alpha_1E_{\tilde{\mu}(\cdot;0)}\Big[\mathcal{L}_\Delta(\tilde{\gamma},0|_{\tilde{\gamma}})\Big]-\alpha_1r-\alpha_1T-2\alpha_1L_\ast T-r-1\\
&\ge&-\alpha_1\alpha_2-\alpha_1r-\alpha_1T-2\alpha_1L_\ast T-r-1\\
&=&-(\alpha_3+r+1).
\end{eqnarray*} 
\indent Similar reasoning by $\tilde{\mu}(\cdot;\tilde{\zeta})$ with $\tilde{\zeta}(x_m,t_k):=\xi^\ast(x_m-2\dx,t_k)$ yields the upper bound of $D_xV^{l+1}_{n+2}$. 

Since (\ref{proof-3}) becomes an equality for the minimizing velocity field, we conclude 3. 
\end{proof}
Theorem \ref{Main3} follows from Proposition \ref{Prop-CL-HJ} and Theorem \ref{Main2}. Theorem \ref{Main4} follows from (\ref{proof-Main5}) and the similar reasoning for the upper bound of $D_xV^{l+1}_{n+2}$. 

We observe several properties of the function $v:\T\times[0,T]\to\R$ defined as 
$$v(x,t):=\inf_{\gamma\in AC,\,\,\gamma(t)=x}\mathcal{L}(\gamma,\gamma')+h(c )t,\,\,\,v(\cdot,0):=v^0(\cdot),$$
which is the viscosity solution of (\ref{HJ}). There exists at least one minimizing curve for $v(x,t)$, due to Tonelli's theory. 
\begin{Lemma}\label{lemma-regularity}
Let $\gamma^\ast:[0,t]\to\R$ be a minimizing curve for $v(x,t)$.
 \begin{enumerate}
 \item The following regularity properties hold:
 \begin{eqnarray*}
&&L^c_{\xi}(\gamma^\ast(\tau),\tau,\gamma^\ast{}'(\tau))\in\p^-_xv(\gamma^\ast(\tau),\tau)\mbox{ for $0\le \tau<t$}, \\
&&L^c_{\xi}(\gamma^\ast(\tau),\tau,\gamma^\ast{}'(\tau))\in\p^+_xv(\gamma^\ast(\tau),\tau)\mbox{ for $0< \tau\le t$,}
\end{eqnarray*}
 where $\p^-_xv$ ($\p^+_xv$) is the subdifferential (superdifferential). In particular $v_x(\gamma^\ast(\tau),\tau)$ exists for $0<\tau<t$ and is equal to $L^c_{\xi}(\gamma^\ast(\tau),\tau,\gamma^\ast{}'(\tau))$.
 \item $|\gamma^\ast{}'(\tau)|\le\lambda^{-1}_1$ for $0\le \tau\le t$, where $\lambda_1$ is given in (\ref{lambda_1}).
 \item $v$ is Lipschitz continuous. 
 \item If $(x,t)$ is a regular point, then we have for any $0\le\tau< t$
 $$ u(x,t):=v_x(x,t)=\int^t_0L^c_x(\gamma^\ast(s),s,\gamma^\ast{}'(s))ds +L^c_{\xi}(\gamma^\ast(\tau),\tau,\gamma^\ast{}'(\tau)).$$ 
 If $v^0$ is semiconcave or $u^0$ is rarefaction-free, then $L^c_{\xi}(\gamma^\ast(0),0,\gamma^\ast{}'(0))=u^0(\gamma^\ast(0))$. In particular, $v$ is the viscosity solution of (\ref{HJ}) and $u$ the entropy solution of (\ref{CL}).
 \end{enumerate}
 \end{Lemma}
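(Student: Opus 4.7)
The plan is to deduce all four assertions from Tonelli's theory of the calculus of variations applied to $L^c$, combined with an a priori estimate that mirrors the discrete bound (\ref{bound}) used in the proof of Theorem \ref{Main2}. Tonelli gives existence of a $C^2$ minimizer $\gamma^\ast$ satisfying the Euler--Lagrange equation $\frac{d}{ds}L^c_\xi(\gamma^\ast,s,\gamma^\ast{}')=L^c_x(\gamma^\ast,s,\gamma^\ast{}')$, which I take as classical. The rest of the argument organizes four cascading steps.

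For part 1, I would derive the superdifferential inclusion at $(\gamma^\ast(\tau),\tau)$ with $\tau>0$ by comparing $v(\gamma^\ast(\tau)+h,\tau)$ with the initial-segment perturbation $\eta_h(s):=\gamma^\ast(s)+hs/\tau$ on $[0,\tau]$, Taylor-expanding in $h$, and integrating by parts via the Euler--Lagrange equation to isolate the linear term $L^c_\xi(\gamma^\ast(\tau),\tau,\gamma^\ast{}'(\tau))\cdot h$. The subdifferential inclusion at $\tau<t$ is symmetric: apply the dynamic programming principle $v(x,t)\le v(\gamma^\ast(\tau)+h,\tau)+\int_\tau^t L^c(\eta_h,s,\eta_h')ds+h(c)(t-\tau)$ with the terminal-segment perturbation $\eta_h(s):=\gamma^\ast(s)+h(t-s)/(t-\tau)$ on $[\tau,t]$, and integrate by parts once more.

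For part 2, integrating the Euler--Lagrange equation along $\gamma^\ast$ gives $L^c_\xi(\gamma^\ast(\tau),\tau,\gamma^\ast{}'(\tau))=L^c_\xi(\gamma^\ast(0),0,\gamma^\ast{}'(0))+\int_0^\tau L^c_x(\gamma^\ast,s,\gamma^\ast{}')ds$. Part 1 at $\tau=0$ with the bound $\norm u^0\norm_{L^\infty}\le r$ yields $|L^c_\xi(\gamma^\ast(0),0,\gamma^\ast{}'(0))|\le r$. Minimality of $\gamma^\ast$ against the constant competitor $\eta\equiv x$ produces $\int_0^\tau L^c(\gamma^\ast,\cdot,\gamma^\ast{}')ds\le\alpha_2+r$, which combined with $L^c\ge -L_\ast$ gives $\int_0^\tau|L^c|ds\le\alpha_2+r+2L_\ast T$; (A4) then yields $\int_0^\tau|L^c_x|ds\le\alpha_3$. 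Hence $|L^c_\xi(\gamma^\ast(\tau),\tau,\gamma^\ast{}'(\tau))|\le r+\alpha_3$, and Legendre duality $\gamma^\ast{}'=H_p(\gamma^\ast,\tau,c+L^c_\xi)$ together with the monotonicity of $H_p$ and definition (\ref{lambda_1}) gives $|\gamma^\ast{}'|\le\lambda_1^{-1}$. Part 3 then follows from a translation argument for Lipschitz continuity in $x$ (comparing with the shifted competitor $\gamma^\ast+(y-x)$, using (A4) and a Gronwall estimate in the translation parameter), while Lipschitz continuity in $t$ comes from the dynamic programming principle together with the speed bound just obtained. Part 4 is then immediate: at a regular point, sub- and superdifferential collapse, so part 1 identifies $v_x(x,t)=L^c_\xi(\gamma^\ast(t),t,\gamma^\ast{}'(t))$, and integrating the Euler--Lagrange equation from $\tau$ to $t$ produces the representation; the rarefaction-free/semiconcave case at $\tau=0$ reduces to the standard identification $\partial^-v^0(\gamma^\ast(0))\ni u^0(\gamma^\ast(0))$ under that assumption.

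The main obstacle I expect is the precise bookkeeping in part 2, so that the constant $r+\alpha_3$ agrees with the discrete bound (\ref{bound}) and the resulting CFL threshold $\lambda_1$ coincides with the one of Theorem \ref{Main2}. This must be established directly in the continuous setting, since the natural strategy of transferring the bound from the discrete scheme through Theorems \ref{Main3}--\ref{Main5} is unavailable here --- those theorems are consumers, not providers, of the present lemma.
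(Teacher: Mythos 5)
Your proposal is correct, and for parts 1--3 it is in substance the paper's own argument: Tonelli existence plus the Euler--Lagrange equation, first-variation arguments for the two inclusions, the action bound from the constant competitor combined with (A4) to get $|L^c_\xi(\gamma^\ast(\tau),\tau,\gamma^\ast{}'(\tau))|\le r+\alpha_3$, and then Legendre duality with the definition (\ref{lambda_1}) for the velocity bound, with Lipschitz continuity of $v$ as a consequence (once part 2 is in hand the Gronwall step you mention is not even needed, since $L^c_x$ is bounded on the relevant compact set of $(x,t,\xi)$). The genuine differences are these. In part 1 you tilt over the whole segments $[0,\tau]$ and $[\tau,t]$ and integrate by parts exactly, whereas the paper tilts only on a short interval of length $\varepsilon$ with the coupling $\varepsilon=\sqrt{\delta}$ and, for the subdifferential, concatenates with a minimizer of the nearby point; both are sound, yours isolating the linear term exactly, the paper's keeping the perturbation local. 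In part 4 your route is shorter and valid: read off $v_x(x,t)=L^c_\xi(\gamma^\ast(t),t,\gamma^\ast{}'(t))$ from the superdifferential inclusion at $\tau=t$ (at a regular point any superdifferential element must equal the derivative) and integrate Euler--Lagrange from $\tau$ to $t$; the paper instead proves the upper inequality via a shifted competitor together with part 1 at intermediate times, and the lower inequality by passing through a sequence of nearby regular points $x+\delta_j$ and using convergence of their (unique) minimizers. Note in this connection that the $\int_0^t$ in the statement should be read as $\int_\tau^t$, as both your derivation and the paper's proof (and its use in Theorem \ref{Main7}) show. One small imprecision: for the semiconcave/rarefaction-free claim, mere membership $u^0(\gamma^\ast(0))\in\p^-v^0(\gamma^\ast(0))$ does not suffice; what is needed is that a semiconcave $v^0$ with nonempty subdifferential at a point is differentiable there, so that $\p^-v^0(\gamma^\ast(0))$ is the singleton $\{u^0(\gamma^\ast(0))\}$ and the identification with $L^c_\xi(\gamma^\ast(0),0,\gamma^\ast{}'(0))$ follows -- a step the paper also leaves implicit.
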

\begin{proof}[{\bf Proof.}]
1., 3., 4. are known, but for reader's convenience we give a brief proof.  

1. It follows from the minimizing property of $\gamma^\ast$ that we have for any $0\le\tau<t$ 
$$v(x,t)=\int^t_\tau L^c(\gamma^\ast(s),s,\gamma^\ast{}'(s))ds+v(\gamma^\ast(\tau),\tau)+h(c )(t-\tau)$$
and $\gamma^\ast|_{[0,\tau]}$ is a minimizer for $v(\gamma^\ast(\tau),\tau)$. 
Hence for any $\varepsilon>0$ such that $\tau+\varepsilon\le t$ we have 
$$v(\gamma^\ast(\tau+\varepsilon),\tau+\varepsilon)=\int^{\tau+\varepsilon}_{\tau}L^c(\gamma^\ast(s),s,\gamma^\ast{}'(s))ds+v(\gamma^\ast(\tau),\tau)+h(c )\varepsilon.$$
Define $\gamma:[0,\tau+\varepsilon]\to\R$ as $\gamma(s):=\gamma^\ast(s)+\frac{\tau+\varepsilon-s}{\varepsilon}\delta$ with $\delta\neq0$ for $s\in[\tau,\tau+\varepsilon]$ and
 $\gamma(s):=\tilde{\gamma}^\ast(s)$ for $s\in[0,\tau]$, where $\tilde{\gamma}^\ast$ is a minimizing curve for $v(\gamma^\ast(\tau)+\delta,\tau)$. Then we have 
 $$v(\gamma^\ast(\tau+\varepsilon),\tau+\varepsilon)\le\int^{\tau+\varepsilon}_{\tau}L^c(\gamma(s),s,\gamma'(s))ds+v(\gamma^\ast(\tau)+\delta,\tau)+h(c )\varepsilon.$$
 Therefore we obtain
  \begin{eqnarray*}
v(\gamma^\ast(\tau)+\delta,\tau)-v(\gamma^\ast(\tau),\tau)\ge L^c_\xi(\gamma^\ast(\tau),\tau,\gamma^\ast{}'(\tau))\delta+O(\varepsilon\delta)+O(\frac{\delta^2}{\varepsilon}),
\end{eqnarray*}
which means that for $\varepsilon=\sqrt{\delta}$
\begin{eqnarray*}
\liminf_{\delta\to0}\frac{v(\gamma^\ast(\tau)+\delta,\tau)-v(\gamma^\ast(\tau),\tau)-L^c_\xi(\gamma^\ast(\tau),\tau,\gamma^\ast{}'(\tau))\delta}{|\delta|}\ge 0.
\end{eqnarray*}
Similar reasoning with $\gamma:[0,\tau]\to\R$ defined as $\gamma(s):=\gamma^\ast(s)+\frac{s-(\tau-\varepsilon)}{\varepsilon}\delta$ for $s\in[\tau-\varepsilon,\tau]$ and $\gamma(s):=\gamma^\ast(s)$ for $s\in[0,\tau-\varepsilon]$ leads to the second inclusion. 

\indent 2. Since $\gamma^\ast$ is a minimizer, we have for $\gamma(s)\equiv x$
$$\int^t_0 L^c(\gamma^\ast(s),s,\gamma^\ast{}'(s))ds+v_0(\gamma^\ast(0))\le \int^t_0 L^c(x,s,0)ds+v_0(x)\le\alpha_2.$$
Since $\gamma^\ast$ satisfies the Euler-Lagrange equation, we obtain
\begin{eqnarray*}
|L^c_\xi(\gamma^\ast(s),s,\gamma^\ast{}'(s))|&=&|L^c_\xi(\gamma^\ast(0),0,\gamma^\ast{}'(0))+\int^t_0L_x^c(\gamma^\ast(s),s,\gamma^\ast{}'(s))ds|\\
&\le&r+\int^t_0\alpha_1(1+|L^c(\gamma^\ast(s),s,\gamma^\ast{}'(s))|)ds\\
&\le&\alpha_3+r+1.
\end{eqnarray*}
\indent 3. follows from 2. 

4. We observe that for any $0<\tau<t$ and $\delta\neq0$
\begin{eqnarray*}
v(x+\delta,t)-v(x,t)&\le&\int^t_\tau L^c(\gamma^\ast(s)+\delta,s,\gamma^\ast{}'(s))-L^c(\gamma^\ast(s),s,\gamma^\ast{}'(s))ds\\
&& +v(\gamma^\ast(\tau)+\delta,\tau)-v(\gamma^\ast(\tau),\tau).
\end{eqnarray*}
Therefore, using 1., we obtain
\begin{eqnarray*} 
u(x,t)=v_x(x,t)&\le& \int^t_\tau L^c_x(\gamma^\ast(s),s,\gamma^\ast{}'(s))ds+L^c_\xi(\gamma^\ast(\tau),\tau,\gamma^\ast{}'(\tau)),
\end{eqnarray*}
which holds for $\tau=0$.  

For the converse inequality, we take a sequence $\delta_j\to0$ such that each $(x+\delta_j,t)$ is a regular point of $v$. Let $\gamma_j$ be the minimizer of $v(x+\delta_j,t)$. Then we have $\lim_{j\to\infty}v_x(x+\delta_j,t)=v_x(x,t)$. Therefore it holds that $(\gamma_j,\gamma_j')\to(\gamma,\gamma')$ uniformly as $j\to\infty$. A similar reasoning with $\gamma_j$ yields   
\begin{eqnarray*} 
u(x,t)=v_x(x,t)&\ge& \int^t_\tau L^c_x(\gamma^\ast(s),s,\gamma^\ast{}'(s))ds+L^c_\xi(\gamma^\ast(\tau),\tau,\gamma^\ast{}'(\tau)),
\end{eqnarray*}
which holds for $\tau=0$. 
We refer to  \cite{Cannarsa} for the fact that $v$ is the viscosity solution of (\ref{HJ}) and $u$ the entropy solution of (\ref{CL}). 
\end{proof}
We state a result of hyperbolic scaling limit of random walks. We introduce a random variable $\eta(\gamma)=\{\eta^k(\gamma)\}_{k=0,1,2,\cdots,l+1}$, $\gamma\in\Omega$ which is  induced by a random walk with a velocity field $\xi$ and is defined as 
$$\eta^{l+1}:=\gamma^{l+1},\,\,\,\,\,\, \eta^k(\gamma):=\gamma^{l+1}-\sum_{k< k'\le l+1}\xi(t_{k'},\gamma^{k'})\dt\mbox{ \,\,\, for $0\le k\le l$}.$$
\begin{Lemma}(\cite{Soga2})\label{eta}
Set $\dis \tilde{\sigma}^k:=E_{\mu(\cdot;\xi)}[|\gamma^k-\eta^k(\gamma)|^2]$ and $\dis \tilde{d}^k:=E_{\mu(\cdot;\xi)}[|\gamma^k-\eta^k(\gamma)|]$ for $0\le k\le l+1$. Then we have 
$$(\tilde{d}^k)^2\le\tilde{\sigma}^k\le \frac{t^{l+1}-t^k}{\lambda}\dx.$$
\end{Lemma}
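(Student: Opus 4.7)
The first inequality $(\tilde d^k)^2 \le \tilde\sigma^k$ is immediate from Jensen's inequality against the probability measure $\mu(\cdot;\xi)$, so the entire content lies in the second bound. The plan is to exploit a martingale decomposition tailored to the backward, space-time inhomogeneous walk on $\mathcal{G}_{odd}$.

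First I would write the telescoping identity
\[
\gamma^k - \eta^k(\gamma) = \sum_{k<k'\le l+1} Z^{k'}, \qquad Z^{k'}:=(\gamma^{k'-1}-\gamma^{k'})+\xi^{k'}_{m(\gamma^{k'})}\dt,
\]
which follows at once from the definition of $\eta^k$ together with $\eta^{l+1}=\gamma^{l+1}$. Working with the backward filtration $\mathcal{F}_{k'}=\sigma(\gamma^{l+1},\ldots,\gamma^{k'})$, a one-line computation using $\bbrho^{k'}_m=\tfrac{1}{2}-\tfrac{1}{2}\lambda\xi^{k'}_m$ and $\brho^{k'}_m=\tfrac{1}{2}+\tfrac{1}{2}\lambda\xi^{k'}_m$ gives $E_{\mu}[\gamma^{k'-1}-\gamma^{k'}\mid\mathcal{F}_{k'}]=-\xi^{k'}_{m(\gamma^{k'})}\dt$, hence $E_{\mu}[Z^{k'}\mid\mathcal{F}_{k'}]=0$. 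Conditioning each cross term on the larger of its two time indices then yields $E_{\mu}[Z^{k'}Z^{k''}]=0$ for $k'\neq k''$, so
\[
\tilde\sigma^k=\sum_{k<k'\le l+1}E_{\mu}\!\bigl[(Z^{k'})^2\bigr].
\]

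The per-step second moment is evaluated directly from the same two-point distribution, using $\xi\dt=\lambda\xi\dx$:
\[
E_{\mu}\!\bigl[(Z^{k'})^2\mid\gamma^{k'}=x_m\bigr]=\bbrho^{k'}_m(\dx+\lambda\xi^{k'}_m\dx)^2+\brho^{k'}_m(-\dx+\lambda\xi^{k'}_m\dx)^2=\dx^2\bigl(1-\lambda^2(\xi^{k'}_m)^2\bigr)\le\dx^2.
\]
Summing over $k'\in(k,l+1]$ gives $\tilde\sigma^k\le (l+1-k)\dx^2=(l+1-k)\dt\cdot\dx/\lambda=(t_{l+1}-t_k)\dx/\lambda$, which is the desired estimate.

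The main conceptual obstacle is identifying the correct centring. Because the transition probabilities vary in both space and time through $\xi^{k'}_{m(\gamma^{k'})}$, neither the raw displacements $\gamma^{k'-1}-\gamma^{k'}$ nor any naive diffusive rescaling produces orthogonal increments; one must subtract the position-dependent drift $\xi^{k'}_{m(\gamma^{k'})}\dt$ before a discrete It\^o isometry is available. Once this centring is in place the estimate is a transparent hyperbolic-scaling analogue of $E[|B_t|^2]=t$, with the factor $\dx/\lambda=\dx^2/\dt$ inherited directly from the CFL-type relation $\dt=\lambda\dx$.
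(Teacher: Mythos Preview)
Your proof is correct. The paper itself does not supply a proof of this lemma; it is stated with a citation to \cite{Soga2} and used as a black box. Your martingale-increment argument (centring by the position-dependent drift, orthogonality of the $Z^{k'}$ under the backward filtration, and the exact per-step variance $\dx^2(1-\lambda^2(\xi^{k'}_m)^2)$) is the natural self-contained derivation, and the arithmetic $(l+1-k)\dx^2=(t_{l+1}-t_k)\dx/\lambda$ is right.
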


\noindent{\bf Remark.} {\it  We always have $\tilde{\sigma}^k\to0$ as $\Delta\to0$ for any $\xi$ under hyperbolic scaling. However the variance does not always tend to $0$ (see \cite{Soga2}). This estimate is analogous to the following: Let $\eta^\nu$ be the stochastic process induced by the solution $\gamma^\nu$ of (\ref{S-ODE}) as $\eta^\nu{}'(s)=\xi^\nu(s,\gamma^\nu(s)),\eta^\nu(t)=x$. Then we have for $s\in[0,t]$
\begin{eqnarray*}\label{key-estimate}
E[ | \eta^\nu(s)-\gamma^\nu(s)| ]=\sqrt{2\nu}E[|B(t-s)|]. 
\end{eqnarray*}}

\noindent{\bf Proof of Theorem \ref{Main5}.} Since $v_\Delta$ and $v$ is Lipschitz continuous,  it is enough to show that $|v^{l+1}_n-v(x_{n},t_{l+1})|=O(\sqrt{\dx})$ for all $0\le l+1\le k(T)$ and $n$. Hereafter, $\beta_1,\beta_2,\cdots$ are constants independent of $\dx,\dt,n,l$. 
Let $\gamma^\ast$ be a minimizer for $v(x_n,t_{l+1})$. By 2. of Lemma \ref{lemma-regularity}, we have $|\gamma^\ast{}'|\le\lambda_1^{-1}<\lambda^{-1}$. We take 
$$\xi(x_m,t_k):=\gamma^\ast{}'(t_k),$$
which yields a space-homogeneous random walk. Then we have 
\begin{eqnarray*}
v(x_n,t_{l+1})=\mathcal{L}(\gamma^\ast,\gamma^\ast{}')+h(c )t_{l+1},\qquad
v^{l+1}_n\le E_{\mu(\cdot;\xi)}\Big[ \mathcal{L}_\Delta(\gamma,\xi|_{\gamma})\Big]+h(c )t_{l+1}.
\end{eqnarray*}
Since $\xi$ is space-homogeneous, $\eta(\gamma)$ and $\mathcal{L}_{\Delta}(\eta(\gamma),\xi|_\gamma)$ are independent of $\gamma$ and satisfy 
\begin{eqnarray*}
|\gamma^\ast(t_k)-\eta^k(\gamma)|&\le& \beta_1 \dx\mbox{  \,\,\,for $0\le k\le l+1$},\\
|\mathcal{L}(\gamma^\ast,\gamma^\ast{}')-\mathcal{L}_{\Delta}(\eta(\gamma),\xi|_\gamma)|&\le&\beta_2\dx+|v^0(\gamma^\ast(0))-v^0_\Delta(\eta^0(\gamma))|\le\beta_3\dx.
\end{eqnarray*}
 By Lemma  \ref{eta}, we obtain 
\begin{eqnarray*}
v^{l+1}_n-v(x_{n},t_{l+1})&\le& E_{\mu(\cdot;\xi)}\Big[ \mathcal{L}_\Delta(\gamma,\xi|_{\gamma})-\mathcal{L}_{\Delta}(\eta(\gamma),\xi|_\gamma)\Big]+\beta_3\dx\\
&=&E_{\mu(\cdot;\xi)}\Big[ \sum_{0<k\le l+1}\big\{L^c(\gamma^k,t_{k-1},\gamma^\ast{}'(t_k))
-L^c(\eta^k(\gamma),t_{k-1},\gamma^\ast{}'(t_k))\big\}\dt\\
&&\qquad+v^0_\Delta(\gamma^0)-v^0_\Delta(\eta^0(\gamma))\Big]+\beta_3\dx\\
&\le&E_{\mu(\cdot;\xi)}\Big[\sum_{0<k\le l+1}\beta_4|\gamma^k-\eta^k(\gamma)|\dt+ r|\gamma^0-\eta^0(\gamma)| \Big]+\beta_3\dx\\
&\le&\beta_5\sqrt{\dx}.
\end{eqnarray*}
Let $\xi^\ast$ be the minimizer for $v^{l+1}_{n}$, which yields a space-time inhomogeneous random walk. Let $\eta_\Delta(\gamma):[0,t_{l+1}]\to\R$ be the linear interpolation of $\eta(\gamma)$. Then we have
\begin{eqnarray*}
&&v^{l+1}_n= E_{\mu(\cdot;\xi^\ast)}\Big[ \mathcal{L}_\Delta(\gamma,\xi^\ast|_{\gamma})\Big]+h(c )t_{l+1},\\
&& v(x_n,t_{l+1})\le\mathcal{L}(\eta_\Delta(\gamma),\eta_{\Delta}(\gamma)')+h(c )t_{l+1} \mbox{ \,\,\,for each $\gamma\in\Omega$},\\
&&|\mathcal{L}(\eta_\Delta(\gamma),\eta_{\Delta}(\gamma)')-\mathcal{L}_\Delta(\eta(\gamma),\xi^\ast|_{\gamma})|\le\beta_6\dx
\mbox{ \,\,\,for each $\gamma\in\Omega$}.
\end{eqnarray*}
By Lemma  \ref{eta}, we obtain 
\begin{eqnarray*}
v^{l+1}_{n}-v(x_n,t_{l+1})&\ge&E_{\mu(\cdot;\xi^\ast)}\Big[ \mathcal{L}_\Delta(\gamma,\xi^\ast|_\gamma)-\mathcal{L}_\Delta(\eta(\gamma),\xi^\ast|_{\gamma}) \Big]-\beta_6\dx\\
&=&E_{\mu(\cdot;\xi^\ast)}\Big[ \sum_{0<k\le l+1}\big\{L^c(\gamma^k,t_{k-1},\xi^\ast{}^k_{m(\gamma^k)})-L^c(\eta^k(\gamma),t_{k-1},\xi^\ast{}^k_{m(\gamma^k)})\big\}\dt \\
&&\qquad +v^0_\Delta(\gamma^0)-v^0_\Delta(\eta^0(\gamma)) \Big]-\beta_6\dx\\
&\ge&-E_{\mu(\cdot;\xi^\ast)}\Big[ \sum_{0<k\le l+1} \beta_4|\gamma^k-\eta^k(\gamma)|\dt +r|\gamma^0-\eta^0(\gamma)| \Big]-\beta_6\dx\\
&\ge&-\beta_7\sqrt{\dx}. \qquad\qquad\qquad\qquad\qquad\qquad\qquad\qquad\qquad\qquad\qquad\qquad  \Box
\end{eqnarray*}
In order to prove the convergence of minimizing random walks to minimizing curves, we observe the following lemmas: 
\begin{Lemma}\label{Lem-minimizer}
Let $\gamma^\ast$ be the unique minimizer for $v(x,t)$. We define for $\varepsilon>0,b>0$
$$\Gamma^\varepsilon:=\{\gamma:[0,t]\to\R\,|\,\gamma\in Lip,\,\,\, |\gamma(t)-\gamma^\ast(t)|\le\varepsilon,\,\,\,|\gamma'(s)|\le b,\,\,\,\mathcal{L}(\gamma,\gamma')\le \mathcal{L}(\gamma^\ast,\gamma^\ast{}')+\varepsilon \}.$$
Then we have \,\,\,$\dis\sup_{\gamma\in\Gamma^\varepsilon}\norm \gamma-\gamma^\ast\norm_{C^0([0,t])}\to0,\quad
\sup_{\gamma\in\Gamma^\varepsilon}\norm \gamma'-\gamma^\ast{}'\norm_{L^2([0,t])}\to0
 \mbox{\quad as $\varepsilon\to0$}.$
\end{Lemma}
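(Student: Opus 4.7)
I would prove both claims simultaneously by contradiction, combining a Tonelli-type compactness argument with the uniqueness of $\gamma^\ast$, and then upgrading weak $L^2$ convergence of derivatives to strong $L^2$ convergence via the uniform strict convexity of $L^c$ in $\xi$. Suppose the conclusion fails. Then there exist $\delta>0$, $\varepsilon_n\downarrow 0$, and $\gamma_n\in\Gamma^{\varepsilon_n}$ with either $\norm \gamma_n-\gamma^\ast\norm_{C^0([0,t])}\ge\delta$ or $\norm \gamma_n{}'-\gamma^\ast{}'\norm_{L^2([0,t])}\ge\delta$. The uniform bound $|\gamma_n{}'|\le b$ together with $|\gamma_n(t)-\gamma^\ast(t)|\le\varepsilon_n$ makes $\{\gamma_n\}$ equi-Lipschitz and uniformly bounded, so Arzel\`a--Ascoli and weak compactness of $\{\gamma_n{}'\}$ in $L^2([0,t])$ yield, after passing to a subsequence, $\gamma_n\to\bar\gamma$ uniformly and $\gamma_n{}'\rightharpoonup\bar\gamma'$ weakly in $L^2$, with $\bar\gamma(t)=x$.

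Tonelli's lower-semicontinuity theorem, which applies since $L^c(x,s,\cdot)$ is convex and $L^c$ is continuous, together with continuity of $v^0$, gives
\[
\mathcal{L}(\bar\gamma,\bar\gamma')\le\liminf_{n\to\infty}\mathcal{L}(\gamma_n,\gamma_n{}')\le\mathcal{L}(\gamma^\ast,\gamma^\ast{}'),
\]
so $\bar\gamma$ is a minimizer for $v(x,t)$, and uniqueness forces $\bar\gamma=\gamma^\ast$, already contradicting the $C^0$ alternative. For the $L^2$ alternative, use $L^c_{\xi\xi}>0$ on the compact set $\T\times[0,t]\times[-b,b]$ to pick $\kappa>0$ with $L^c_{\xi\xi}\ge\kappa$ there, so that for $|\xi|,|\xi^\ast|\le b$,
\[
L^c(x,s,\xi)\ge L^c(x,s,\xi^\ast)+L^c_\xi(x,s,\xi^\ast)(\xi-\xi^\ast)+\tfrac{\kappa}{2}(\xi-\xi^\ast)^2.
\]
Applying this pointwise with $x=\gamma^\ast(s)$, $\xi^\ast=\gamma^\ast{}'(s)$, $\xi=\gamma_n{}'(s)$, using uniform convergence $\gamma_n\to\gamma^\ast$ and continuity of $L^c$ to replace $\int_0^t L^c(\gamma_n,s,\gamma_n{}')\,ds$ by $\int_0^t L^c(\gamma^\ast,s,\gamma_n{}')\,ds+o(1)$, and continuity of $v^0$ for the endpoint term, I obtain
\[
\mathcal{L}(\gamma_n,\gamma_n{}')\ge\mathcal{L}(\gamma^\ast,\gamma^\ast{}')+\int_0^t L^c_\xi(\gamma^\ast,s,\gamma^\ast{}')(\gamma_n{}'-\gamma^\ast{}')\,ds+\tfrac{\kappa}{2}\norm \gamma_n{}'-\gamma^\ast{}'\norm_{L^2}^2+o(1).
\]
The linear term vanishes by weak $L^2$ convergence $\gamma_n{}'\rightharpoonup\gamma^\ast{}'$, since $L^c_\xi(\gamma^\ast,\cdot,\gamma^\ast{}')$ is bounded and hence in $L^2$; combined with $\mathcal{L}(\gamma_n,\gamma_n{}')\le\mathcal{L}(\gamma^\ast,\gamma^\ast{}')+\varepsilon_n$ this forces $\norm \gamma_n{}'-\gamma^\ast{}'\norm_{L^2}\to 0$, the desired contradiction.

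The main obstacle is precisely the upgrade from weak to strong $L^2$ convergence of derivatives via this strict-convexity estimate; the rest is a routine Tonelli compactness-plus-uniqueness argument based only on the convexity and continuity of $L^c$ ensured by (A1)'--(A2)'. The one technical subtlety is checking that the linear term really does vanish under weak $L^2$ convergence of $\gamma_n{}'$, which relies on boundedness of $L^c_\xi$ on the compact range of admissible $(x,s,\xi)$ guaranteed by the uniform Lipschitz bound $|\gamma'|\le b$ built into $\Gamma^\varepsilon$.
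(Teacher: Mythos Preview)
Your argument is correct and follows the same overall architecture as the paper: a compactness/uniqueness contradiction for the $C^0$ statement, followed by a second-order Taylor expansion of $L^c$ in the velocity variable to extract the $L^2$ convergence of derivatives from the strict convexity $L^c_{\xi\xi}\ge\kappa>0$.

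The one genuine difference is in how the first-order term $\int_0^t L^c_\xi(\gamma^\ast,s,\gamma^\ast{}')\,(\gamma_n'-\gamma^\ast{}')\,ds$ is disposed of. You kill it by weak $L^2$ convergence $\gamma_n'\rightharpoonup\gamma^\ast{}'$ along the extracted subsequence. The paper instead integrates by parts and uses that $\gamma^\ast$ satisfies the Euler--Lagrange equation $\frac{d}{ds}L^c_\xi(\gamma^\ast,s,\gamma^\ast{}')=L^c_x(\gamma^\ast,s,\gamma^\ast{}')$, which converts the linear term into boundary terms and an integral all controlled by $\|\gamma-\gamma^\ast\|_{C^0}$. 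This yields the quantitative inequality
\[
\beta_8\,\varepsilon \;\ge\; \beta_{10}\,\|\gamma'-\gamma^\ast{}'\|_{L^2}^2 \;-\; \beta_{11}\,\|\gamma-\gamma^\ast\|_{C^0}
\]
valid for \emph{every} $\gamma\in\Gamma^\varepsilon$, not just along a subsequence. The paper's route is thus slightly more informative (it gives an explicit rate once the $C^0$ rate is known) and uses the extra structure that $\gamma^\ast$ is an Euler--Lagrange extremal; your weak-convergence route is softer but entirely adequate for the stated conclusion and avoids invoking the Euler--Lagrange equation.
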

\begin{proof}[{\bf Proof.}]
Suppose that we have a sequence $\varepsilon_j\to0$ for which there exists $\delta>0$ such that $\sup_{\gamma\in\Gamma^{\varepsilon_j}}\norm \gamma-\gamma^\ast\norm_{C^0}\ge2\delta$ for all $j$. We can take a sequence $\gamma_j\in\Gamma^{\varepsilon_j}$ for which $\norm \gamma_j-\gamma^\ast\norm_{C^0}\ge\delta$ holds for all $j$. We have a sub-sequence of $\gamma_j$, still denoted by $\gamma_j$, such that $\gamma_j\to\tilde{\gamma}$ uniformly. $\tilde{\gamma}$ satisfies $\norm \tilde{\gamma}-\gamma^\ast\norm_{C^0}\ge\delta$, $\tilde{\gamma}(t)=\gamma^\ast(t)=x$ and $\mathcal{L}(\tilde{\gamma},\tilde{\gamma}')=\mathcal{L}(\gamma^\ast,\gamma^\ast{}')$. Therefore $\tilde{\gamma}$ is another minimizer for $v(x,t)$, which is a contradiction.   

Set $\delta(\varepsilon):=\gamma^\ast(t)-\gamma(t)$ for $\gamma\in\Gamma^\varepsilon.$ Since $\gamma^\ast$ is the minimizer, we already have $\mathcal{L}(\gamma^\ast,\gamma^\ast{}')\le \mathcal{L}(\gamma+\delta(\varepsilon),\gamma')$ 
and $|\mathcal{L}(\gamma+\delta(\varepsilon),\gamma')-\mathcal{L}(\gamma,\gamma')|\le \beta_8\varepsilon$ ($\beta_8\ge1$) for any $\gamma\in\Gamma^\varepsilon$. Hence we have $\varepsilon\ge \mathcal{L}(\gamma,\gamma')-\mathcal{L}(\gamma^\ast,\gamma^\ast{}')\ge-\beta_8\varepsilon$. Therefore we obtain 
\begin{eqnarray*}
\beta_8\varepsilon&\ge&|\mathcal{L}(\gamma,\gamma')-\mathcal{L}(\gamma^\ast,\gamma^\ast{}')|\\
&=&\Big|  \int^t_0 \big\{L^c(\gamma(s),s,\gamma'(s))-L^c(\gamma^\ast(s),s,\gamma'(s))\\
&&+L^c(\gamma^\ast(s),s,\gamma'(s))-L^c(\gamma^\ast(s),s,\gamma^\ast{}'(s))\big\}ds+v^0(\gamma(0))-v^0(\gamma^\ast(0))  \Big|\\
&\ge&\Big|  \int^t_0 \big\{L^c_\xi(\gamma^\ast(s),s,\gamma^\ast{}'(s))(\gamma'(s)-\gamma^\ast{}'(s))+\frac{1}{2}L_{\xi\xi}\cdot(\gamma'(s)-\gamma^\ast{}'(s))^2\big\}ds\Big|\\
&&-\beta_9\norm\gamma-\gamma^\ast\norm_{C^0}\\
&=&\Big| L^c_\xi(\gamma^\ast(s),s,\gamma^\ast{}'(s))(\gamma(s)-\gamma^\ast{}(s))|^{s=t}_{s=0}-\int^t_0L^c_\xi(\gamma^\ast(s),s,\gamma^\ast{}'(s))'(\gamma(s)-\gamma^\ast{}(s))ds\\
&&+\int^t_0\frac{1}{2}L_{\xi\xi}\cdot(\gamma'(s)-\gamma^\ast{}'(s))^2ds\Big|-\beta_9\norm\gamma-\gamma^\ast\norm_{C^0},
\end{eqnarray*}
where we remark that $L^c_\xi(\gamma^\ast(s),s,\gamma^\ast{}'(s))'=L^c_x(\gamma^\ast(s),s,\gamma^\ast{}'(s))$ due to the Euler-Lagrange equation.  Since $L^c_{\xi\xi}$ is bounded away from zero on each compact set,   we see that for constants $\beta_{10}>0, \beta_{11}>0$
 \begin{eqnarray*}
\beta_8\varepsilon&\ge&\beta_{10}\norm \gamma'-\gamma^\ast{}'\norm_{L^2}^2
-\beta_{11}\norm\gamma-\gamma^\ast\norm_{C^0}.
 \end{eqnarray*} 
We have already proved that  $\norm\gamma-\gamma^\ast\norm_{C^0}\to0$ as $\varepsilon\to0$. This finishes the proof. 
\end{proof}
\begin{Lemma}\label{L^2-C^0}
Let $f:[0,t]\to\R$ be a Lipschitz function with a Lipschitz constant $\theta$ satisfying $f(t)=0$. Then we have
$\norm f\norm_{C^0([0,t])}\le \theta\norm f\norm_{L^2([0,t])}+\sqrt{\norm f\norm_{L^2([0,t])}}.$
\end{Lemma}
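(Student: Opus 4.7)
The plan is to work directly with a point $s_0 \in [0,t]$ where $|f|$ attains its maximum and to exploit the Lipschitz constraint together with the boundary condition $f(t)=0$. Writing $M := \|f\|_{C^0([0,t])}$ and $N := \|f\|_{L^2([0,t])}$, I would first dispose of the trivial case $M \le \sqrt{N}$, in which the claimed inequality is immediate. The interesting regime is $M > \sqrt{N}$.

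In that regime I would pick $s_0$ with $|f(s_0)| = M$. Combining $f(t)=0$ with the $\theta$-Lipschitz bound forces $M \le \theta(t-s_0)$, so that $[s_0, s_0 + M/\theta] \subset [0,t]$ and, on this interval, $|f(s)| \ge M - \theta(s-s_0) \ge 0$. The decisive step is then to integrate $|f|^2$ over a subinterval $[s_0, s_0 + \delta]$ of carefully tuned length rather than the full positivity interval. Taking $\delta := (M - \sqrt{N})/\theta$ lies in $(0, M/\theta]$ and keeps $|f(s)| \ge M - \theta\delta = \sqrt{N}$ throughout, so
\begin{equation*}
N^2 \;\ge\; \int_{s_0}^{s_0+\delta} |f(s)|^2\, ds \;\ge\; \delta \cdot N,
\end{equation*}
which yields $\delta \le N$ and, rearranging, $M - \sqrt{N} \le \theta N$, i.e.\ the claim.

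The argument has no genuine conceptual obstacle; the only subtlety is calibrating $\delta$ so as to recover the precise form of the stated inequality. A naive choice such as integrating over the entire positivity interval $[s_0, s_0 + M/\theta]$ gives only $N^2 \ge M^3/(3\theta)$, i.e.\ a bound of the form $M \le (3\theta)^{1/3} N^{2/3}$, which cannot be rearranged into $\theta N + \sqrt{N}$ with the constants as written. The dichotomy between $M \le \sqrt{N}$ and $M > \sqrt{N}$, and the threshold $\sqrt{N}$ appearing in the definition of $\delta$, are therefore dictated by the exact form of the inequality to be proved, and both turn out to be what makes the rearrangement work cleanly.
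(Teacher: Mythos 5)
Your proof is correct and rests on essentially the same mechanism as the paper's: the dichotomy at the level $\sqrt{\norm f\norm_{L^2}}$, a Chebyshev-type use of the $L^2$ norm to bound the time spent at or above that level by $\norm f\norm_{L^2}$, and the Lipschitz constant to bridge from the maximum down to $\sqrt{\norm f\norm_{L^2}}$. The only cosmetic difference is that you obtain the lower bound on that time from the Lipschitz cone at the maximum point (the interval of length $(M-\sqrt{N})/\theta$), while the paper works with the superlevel set $I$ and an intermediate-value crossing point to the right of the maximum; both routes use $f(t)=0$ in the corresponding step and yield the same rearrangement.
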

\begin{proof}[{\bf Proof.}]
We suppose that $\norm f\norm_{C^0}>\sqrt{\norm f\norm_{L^2}}$. 
Otherwise we get the conclusion. 
For $I:=\{s\in[0,t]\,\,|\,\, |f(s)|\ge \sqrt{\norm f\norm_{L^2}}\}$, we have  
$\norm f\norm_{L^2}^2\ge \int _I|f(s)|^2ds\ge \norm f\norm_{L^2}|I|$ and 
$|I|\le \norm f\norm_{L^2}$.  The set $I$ necessarily contains $s_\ast$ such that $|f(s_\ast)|=\norm f\norm_{C^0}$. Since $f(t)=0$, there always exists $s_0>s_\ast$ such that $f(s_0)=\sqrt{\norm f\norm_{L^2}}$ and $[s_\ast,s_0]\subset I$. Hence we have 
$|f(s_0)-f(s_\ast)|\le \theta|s_0-s_\ast|\le \theta |I|$. Therefore we obtain 
$| f(s_\ast)|\le \theta |I|+|f(s_0)|.$ 
\end{proof}
\begin{proof}[{\bf Proof of Theorem \ref{Main6}.}]
Consider  $\Omega^\varepsilon_\Delta:=\{\gamma\in\Omega\,|\, 
\norm \gamma_\Delta-\gamma^\ast \norm_{C^0}\le\varepsilon \}$, 
where $\gamma_\Delta$ is the linear interpolation of $\gamma$ extended to $s\in[0,t]$.  We show that, for each arbitrarily fixed $\varepsilon>0$, we have 
$prob(\Omega^\varepsilon_\Delta)\to1$ as $\Delta\to0$.    

First of all we prove the following:  Introduce  
$$\Gamma^\varepsilon_\Delta:=\{\gamma\in\Omega\,|\, 
\norm \eta_\Delta(\gamma)-\gamma^\ast \norm_{C^0}\le\varepsilon\mbox{ and }\norm \eta_\Delta(\gamma)'-\gamma^\ast{}' \norm_{L^2}\le\varepsilon \},$$
where $\eta_\Delta(\gamma)$ is the linear interpolation of $\eta(\gamma)$ extended to $s\in[0,t]$.  Then, for each arbitrarily fixed $\varepsilon>0$, we have 
$prob(\Gamma^\varepsilon_\Delta)\to1$  as $\Delta\to0$. In fact, we observe that
 \begin{eqnarray*}
 v^{l+1}_n
 &=&E_{\mu(\cdot;\xi^\ast)}\Big[\mathcal{L}_\Delta(\eta(\gamma),\xi^\ast|_{\gamma})\Big]+h(c )t_{l+1}+O(\sqrt{\dx})\\
 &=&E_{\mu(\cdot;\xi^\ast)}\Big[\mathcal{L}(\eta_\Delta(\gamma),\eta_\Delta(\gamma)')\Big]+h(c )t_{l+1}+O(\sqrt{\dx}),\\
v^{l+1}_n-v(x,t)&=& E_{\mu(\cdot;\xi^\ast)}\Big[\mathcal{L}_\Delta(\gamma,\xi^\ast|_\gamma)-\mathcal{L}(\gamma^\ast,\gamma^\ast{}')\Big]+h(c )(t_{l+1}-t)\\
&=& E_{\mu(\cdot;\xi^\ast)}\Big[\mathcal{L}(\eta_\Delta(\gamma),\eta_\Delta(\gamma)')-\mathcal{L}(\gamma^\ast,\gamma^\ast{}')\Big]+O(\sqrt{\dx})\\
&=&O(\sqrt{\dx}).
 \end{eqnarray*}
Since $\gamma^\ast$ is the minimizer, we have  for  all $\gamma\in\Omega$ 
$$ 0\le \mathcal{L}(\eta_\Delta(\gamma)+x-x_n,\eta_\Delta(\gamma)')-\mathcal{L}(\gamma^\ast,\gamma^\ast{}')=\mathcal{L}(\eta_\Delta(\gamma),\eta_\Delta(\gamma)')-\mathcal{L}(\gamma^\ast,\gamma^\ast{}')
+\beta_{12}\dx.$$
Consider $\Omega^+:=\{\gamma\in\Omega\,|\, \mathcal{L}(\eta_\Delta(\gamma),\eta_\Delta(\gamma)')-\mathcal{L}(\gamma^\ast,\gamma^\ast{}')\ge \dx^{\frac{1}{4}}\}$. Then we obtain
\begin{eqnarray*}
O(\sqrt{\dx})=E_{\mu(\cdot;\xi^\ast)}\Big[\mathcal{L}(\eta_\Delta(\gamma),\eta_\Delta(\gamma)')-\mathcal{L}(\gamma^\ast,\gamma^\ast{}')\Big]
\ge\sum_{\gamma\in\Omega^+}\mu(\gamma;\xi^\ast)\dx^{\frac{1}{4}}-\beta_{12}\dx.
\end{eqnarray*}
Therefore $prob(\Omega^+)=O(\dx^{\frac{1}{4}})$. It follows from Lemma \ref{Lem-minimizer} that  for $\dx\ll\varepsilon$ 
$$\Omega\setminus\Omega^+\subset \Gamma^\varepsilon_\Delta.$$
Thus we conclude that $1-prob(\Omega^+)\le prob(\Gamma^\varepsilon_\Delta)\le1$ and $prob(\Gamma^\varepsilon_\Delta)\to1$ as $\Delta\to0$.

Since $\eta(\gamma)$ converges to $\gamma^\ast$ uniformly in probability, for any $\varepsilon'>0$ there exists $\delta>0$ such that if $|\Delta|<\delta$ we have
 \begin{eqnarray*}
 E_{\mu(\cdot;\xi^\ast)}[|\gamma^k- \gamma^\ast(t_k)|]&\le&E_{\mu(\cdot;\xi^\ast)}[|\gamma^k- \eta^k(\gamma)|]
 +  E_{\mu(\cdot;\xi^\ast)}[|\eta^k(\gamma)- \gamma^\ast(t_k)|] \le\varepsilon'\mbox{  for all $k$},\\
 E_{\mu(\cdot;\xi^\ast)}[\norm \gamma_\Delta-\gamma^\ast \norm_{L^2}]&\le& \varepsilon'.
 \end{eqnarray*}
 Define $\Omega^{++}:=\{\gamma\in\Omega\,|\, \norm \gamma_\Delta-\gamma^\ast \norm_{L^2}\ge\sqrt{\varepsilon'}\}$. Since 
 $\varepsilon'\ge E_{\mu(\cdot;\xi^\ast)}[\norm \gamma_\Delta-\gamma^\ast \norm_{L^2}]\ge prob(\Omega^{++})\sqrt{\varepsilon'} ,$ 
we have $prob(\Omega^{++})\le \sqrt{\varepsilon'}$. 
  Take $\varepsilon'=O(\varepsilon^4)$. By Lemma \ref{L^2-C^0} we have $\norm\gamma_\Delta-\gamma^\ast\norm_{C^0}\le \varepsilon$ for all $\gamma\in \Omega\setminus \Omega^{++}$.  Therefore we conclude that $\Omega\setminus\Omega^{++}\subset\Omega^\varepsilon_\Delta$, $1-\sqrt{\varepsilon'}\le prob(\Omega^\varepsilon_\Delta)\le1$ and $prob(\Omega^\varepsilon_\Delta)\to1$ as $\Delta\to0$.
\end{proof}
\begin{proof}[{\bf Proof of Theorem \ref{Main7}.}]
Let $(x_n,t_{l+1})$ be such that $x\in[x_n-\dx,x_n+\dx),\,\,t\in[t_{l+1},t_{l+2})$. It is enough to show that
$u^{l+1}_{n+1}\to u(x,t)\mbox{ as $\Delta\to0$}.$ We set 
$$\mathcal{L}'(\gamma;\gamma'):=\int^t_0L_x(\gamma(s),s,\gamma'(s))ds.$$
\indent First of all we assume that $v^0$ is semiconcave, or $u^0$ is rarefaction-free.    Let $\xi^\ast$ be the minimizer for $v^{l+1}_{n}$. It follows from Theorem \ref{Main4} that 
$$u^{l+1}_{n+1}\le E_{\mu(\cdot;\xi)}\Big[\mathcal{L}'(\eta_\Delta(\gamma);\eta_\Delta(\gamma)')+u^0_\Delta(\gamma^0+\dx)\Big]+O(\sqrt{\dx}),$$
where $\eta_\Delta(\gamma)$ is the linear interpolation of $\eta(\gamma)$ extended to $s\in[0,t]$.
Therefore, by 4. of Lemma \ref{lemma-regularity}, we obtain
\begin{eqnarray*}
u^{l+1}_{n+1}-u(x,t)&\le& E_{\mu(\cdot;\xi^\ast)}\Big[\mathcal{L}'(\eta_\Delta(\gamma);\eta_\Delta(\gamma)')-\mathcal{L}'(\gamma^\ast;\gamma^\ast{}')\\
&&+u^0_\Delta(\gamma^0+\dx)-u^0(\gamma^\ast(0)) \Big]+O(\sqrt{\dx}),\\
u^0_\Delta(\gamma^0+\dx)-u^0(\gamma^\ast(0))&=&\frac{1}{2\dx}\int_{(\gamma^0+\dx)-\dx}^{(\gamma^0+\dx)+\dx}u^0(y)-u^0(\gamma^\ast(0))dy.
\end{eqnarray*}
Since  $\norm\eta_\Delta(\gamma)-\gamma^\ast\norm_{C^0}\to0$,  $\norm\eta_\Delta(\gamma)'-\gamma^\ast{}'\norm_{L^2}\to0$, $\norm\gamma_\Delta-\gamma^\ast\norm_{C^0}\to0$ as $\Delta\to0$ in probability and $u^0(y)\to u^0(\gamma^\ast(0))$ as $y\to\gamma^\ast(0)$, we obtain
\begin{eqnarray}\label{sup}
\limsup_{\Delta\to0}(u^{l+1}_{n+1}-u(x,t))=0.
\end{eqnarray}
Similar reasoning with the converse inequality in Theorem \ref{Main4} yields  
\begin{eqnarray}\label{inf}
\liminf_{\Delta\to0}(u^{l+1}_{n+1}-u(x,t))=0.
\end{eqnarray}
\indent We remove the assumption that $v^0$ is semiconcave, or $u^0$ is rarefaction-free. Hence we do not always have $u^0(\gamma^\ast(0))$ (in this case, $u^0$ jumps up at $x=\gamma^\ast(0)$).  By 4. of Lemma \ref{lemma-regularity}  and Theorem \ref{Main4}, we see that for any $0\le\tau<\min\{t_{l+1},t\}$ 
\begin{eqnarray*}
u(x,t)&=&\int^t_\tau L^c_x(\gamma^\ast(s),s,\gamma^\ast{}'(s))ds+L^c_{\xi}(\gamma^\ast(\tau),\tau,\gamma^\ast{}'(\tau)),\\
u^{l+1}_{n+1}&\le&E_{\mu(\cdot;\xi^\ast)}\Big[\sum_{k(\tau)<k\le l+1}L^c_x(\gamma^{k},t_{k-1},\xi^\ast{}^k_{m(\gamma^k)})\dt+D_xv^{k(\tau)}_{m(\gamma^{k(\tau)})+1} \Big]+O(\dx)\\
&=&E_{\mu(\cdot;\xi^\ast)}\Big[\sum_{k(\tau)<k\le l+1}L^c_x(\gamma^{k},t_{k-1},\xi^\ast{}^k_{m(\gamma^k)})\dt+L^c_\xi(\gamma^{k(\tau)},t_{k(\tau)},\xi^\ast{}^{k(\tau)+1}_{m(\gamma^{k(\tau)})} ) \Big]+O(\dx)\\
&=&E_{\mu(\cdot;\xi^\ast)}\Big[ \int^t_\tau L^c_x(\eta_\Delta(\gamma)(s),s,\eta_\Delta(\gamma)'(s))ds\\
&&\qquad\qquad\qquad\qquad\qquad\qquad\qquad +L^c_{\xi}(\eta_\Delta(\gamma)(\tau),\tau,\eta_\Delta(\gamma)'(\tau))\Big]+O(\sqrt{\dx}).
\end{eqnarray*} 
The average of $\eta_\Delta(\gamma)'$, denoted by $\bar{\eta}_\Delta'$, converges to $\gamma^\ast{}'$ in $L^2$. Therefore for any $\varepsilon>0$ there exists $\delta>0$ such that if $|\Delta|<\delta$ we have $\norm\bar{\eta}_\Delta'-\gamma^\ast{}'\norm_{L^2}<\varepsilon$. In particular we have $0\le\tau^\ast<\min\{t_{l+1},t\}$ such that $|\bar{\eta}_\Delta'(\tau^\ast)-\gamma^\ast{}'(\tau^\ast)|<\varepsilon$. Taking $\tau=\tau^\ast$, we conclude (\ref{sup}). Similar reasoning yields (\ref{inf}).
\end{proof}

\noindent{\bf Acknowledgements.} I would like to thank Prof. Takaaki Nishida and Prof. Wilhelm Stannat, who have interests in this work and listened to me. I am profited by ``International Research Training Group 1529  Mathematical Fluid Dynamics'' associated with Technische Universit\"at Darmstadt and Waseda University.

\end{document}